\newcommand{\Pp}{\mathcal{P}}
\newcommand{\Aa}{\mathcal{A}}
\newcommand{\V}{\mathcal{V}}
\newcommand{\Ba}{\backslash}
\newtheorem{thm}{Theorem}[section]
\newtheorem{prop}[thm]{Proposition}
\newtheorem{lem}[thm]{Lemma}
\newtheorem{cor}[thm]{Corollary}
\theoremstyle{remark}
\newtheorem{rem}[thm]{Remark}
\newtheorem{defn}[thm]{Definition}
\newtheorem{exa}[thm]{Example}
\title{A polynomial associated with rooted trees and specific posets}
\author{Valisoa Razanajatovo Misanantenaina}
\address{V. Razanajatovo Misanantenaina\\
Department of Logistics\\
Stellenbosch University\\
Private Bag X1\\
Matieland 7602\\
South Africa}
\email{valisoa@sun.ac.za}
\thanks{The first author was supported by the DSI-NRF Centre of Excellence in Mathematical and Statistical Sciences (CoE-MaSS), South Africa, grant number 91486.}
\author[S. Wagner]{Stephan Wagner}
\address{S. Wagner\\
Department of Mathematics\\
Uppsala Universitet\\
Box 480\\
751 06 Uppsala\\
Sweden\\
\and
Department of Mathematical Sciences\\ 
Stellenbosch University\\
Private Bag X1\\
Matieland 7602\\
South Africa\\
}
\thanks{The second author was supported by the National Research Foundation of South Africa, grant number 96236, and the Knut and Alice Wallenberg Foundation.}
\email{stephan.wagner@math.uu.se}
\subjclass[2020]{Primary 05C31; secondary 05C05, 06A07}
\keywords{Rooted trees, polynomials, antichains, cutsets, posets}
\begin{document}

\begin{abstract}
We investigate a trivariate polynomial associated with rooted trees. It generalises a bivariate polynomial for rooted trees that was recently introduced by Liu. We show that this polynomial satisfies a deletion-contraction recursion and can be expressed as a sum over maximal antichains. Several combinatorial quantities can be obtained as special values, in particular the number of antichains, maximal antichains and cutsets. We prove that two of the three possible bivariate specialisations characterise trees uniquely up to isomorphism. One of these has already been established by Liu, the other is new. For the third specialisation, we construct non-isomorphic trees with the same associated polynomial.

We finally find that our polynomial can be generalised in a natural way to a family of posets that we call $\V$-posets. These posets are obtained recursively by either disjoint unions or adding a greatest/least element to existing $\V$-posets.
\end{abstract}

\maketitle

\section{Introduction and background}

There are many different polynomials that have been associated with graphs, for example the chromatic polynomial, matching polynomial, independence polynomial, characteristic polynomial, and many more. A famous instance of a bivariate polynomial associated with a graph is the \emph{Tutte polynomial}. Its basic concept can be traced back to \cite{Tut47,Tut}, in the context of colourings and flow problems in graphs. 

Tutte polynomials for trees have been investigated in \cite{chaudhary}, which introduced different versions of ranks and modified the rank formulation of the Tutte polynomial. The authors of \cite{chaudhary} showed in particular that their polynomials completely determine a rooted tree. 

Another polynomial in two variables that also has the property that two rooted trees are isomorphic if and only if they have the same associated polynomial was recently introduced by Liu in \cite{liu2021tree}: it is determined by the recursion
\[
p(T;x,y)=\begin{cases}x & \text{if $T$ has only one vertex},\\ \prod_{i=1}^k p(T_i;x,y) + y & \text{otherwise}, \end{cases}
\]
where $T_1,T_2,\ldots,T_k$ are the branches of $T$ (the rooted trees that remain as connected components when the root of $T$ is removed). Liu also gave a combinatorial interpretation of this polynomial in terms of what he calls primary subtrees. In this paper, we will extend Liu's definition to a three-variable polynomial: let $|T|$ denote the number of vertices of $T$. Set
\begin{equation}\label{eq:PRecursion}
\Pp(T;x,y,z)=\begin{cases}x & \text{if $T$ has only one vertex},\\ \prod_{i=1}^k \Pp(T_i;x,y,z)+yz^{|T|-1} & \text{otherwise}. \end{cases}
\end{equation}
Note in the second case that the exponent of $z$ can also be expressed in terms of the branches: $|T| - 1 = |T_1| + |T_2| + \cdots + |T_k|$, since the number of vertices in $T$ can be obtained by summing the number of vertices over all branches and adding $1$ for the root.
Observe that Liu's polynomial is a special case, namely $p(T;x,y) = \Pp(T;x,y,1)$. Hence it is clear that $\Pp(T;x,y,z)$ also determines $T$ uniquely. We prove that this is also the case for the two-variable specialisation $\Pp(T;1,y,z)$ (Theorem~\ref{thm:uniquely_det}), but not for $\Pp(T;x,1,z)$.

Moreover, we show that our polynomials satisfy a deletion-contraction recursion (Proposition~\ref{pro1}) and give an expansion as a sum over maximal antichains (Theorem~\ref{thmanti}). A benefit of adding the third variable is that it allows to obtain further combinatorial quantities as special cases (see Theorem~\ref{thmeval}): in particular, we obtain the number of antichains, maximal antichains and cutsets. These invariants are mainly defined for rooted trees and posets. Counting problems on antichains have been explored in \cite{Klazar} and extremal questions are considered in \cite{Andriantiana1}. On the other hand, the number of cutsets (also called transversals) in trees has been investigated in \cite{campos,gitten}, with some applications in \cite{farley}. 

In Section~\ref{secpos}, we further generalise the polynomial $\Pp(T;x,y,z)$ to a family of specific posets that we call $\V$-\emph{posets}. They can either be defined recursively, see Definition~\ref{defnvpo}, or by means of two forbidden subposets (the N-poset and the bowtie). As we will see, $\V$-posets are the natural class of posets to generalise our polynomials. They can still be expressed as a sum over maximal antichains, and yield the number of antichains and cutsets as special values.

Some of the properties of the polynomial $\Pp(T;x,y,z)$ are reminiscent of the Tutte polynomial. Let us briefly give some background for comparison. The Tutte polynomial $T(G;x,y)$  can be defined in two different ways: the first definition is recursive, in terms of deletion and contraction.

\begin{defn}
A \emph{loop} is an edge whose end vertices are the same, and a \emph{bridge} is an edge whose removal disconnects the component where it lies. Let $e$ be an edge of a graph. The \emph{deletion} $G\Ba e$ is the graph obtained from $G$ by removing $e$. The \emph{contraction} $G/e$ is the graph that results after contracting $e$ to a single vertex. Namely, the end vertices of $e$ are identified, while all other adjacencies remain the same. The Tutte polynomial $T(G;x,y)$ satisfies the following axioms:
\begin{itemize}
\item If $E(G)=\emptyset$, then $T(G;x,y)=1$.
\item If $e$ is a bridge, then $T(G;x,y)=xT(G\Ba e;x,y)$.
\item If $e$ is a loop, then $T(G;x,y)=yT(G\Ba e;x,y)$.
\item If $e$ is neither a bridge nor a loop, then
\[T(G;x,y)=T(G\Ba e;x,y)+T(G/e;x,y).\]
\end{itemize}
\end{defn}

The second definition of the Tutte polynomial focuses on its relation to the rank of graphs. We denote by $k(G)$ the number of connected components of $G$.

\begin{defn}
Let $G$ be a graph. Let $A$ be a subset of $E(G)$, and identify $A$ with the subgraph $G_{A}=(V(G),A)$. Thus all graphs $G_A$ are spanning subgraphs of $G$. We define the \emph{rank} of $A$ by 
\[r(A)=|V(G)|-k(G_A).\]
\end{defn}

\begin{rem}
We easily notice that $0 \leq r(A) \leq |A|$ with
\begin{align*}
r(A)&=0 \Longleftrightarrow A=\emptyset,\\
r(A)&=|A| \Longleftrightarrow G_A\: \text{is a forest}.
\end{align*}
\end{rem}

\begin{defn}
Let $G$ be a graph. The \emph{rank polynomial} of $G$ is defined as follows:
\[R(G;u,v)=\sum_{A \subseteq E(G)}u^{r(G)-r(A)}v^{|A|-r(A)}.\]
\end{defn}

The Tutte polynomial can now be expressed in terms of the rank polynomial.

\begin{thm}
The Tutte polynomial $T(G;x,y)$ is uniquely given by
\[T(G;x,y)=R(G;x-1,y-1).\]
\end{thm}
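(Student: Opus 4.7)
The plan is to verify that $R(G;x-1,y-1)$ satisfies the three defining axioms of $T(G;x,y)$, after first noting that those axioms determine the Tutte polynomial uniquely. Uniqueness follows by induction on $|E(G)|$: if $E(G)=\emptyset$ the polynomial is forced to equal $1$, and otherwise any chosen edge is either a bridge, a loop, or neither, and each case expresses $T(G;x,y)$ in terms of polynomials of graphs with fewer edges. So the only real work is checking the three axioms.

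For the empty case, the only subset is $A=\emptyset$ with $r(A)=r(G)=0$, so the single term contributes $1$. For a loop $e$, I split the sum over $A \subseteq E(G)$ according to whether $e \in A$. Including a loop changes neither $r(A)$ nor $r(G)$, so subsets containing $e$ pick up only an extra factor $v$ from $|A|-r(A)$, while subsets avoiding $e$ are in rank-preserving bijection with subsets of $E(G\setminus e)$; this gives $R(G;u,v)=(1+v)\,R(G\setminus e;u,v)$, which yields the loop axiom after substituting $v=y-1$. For an edge $e$ that is neither a loop nor a bridge, I split the sum the same way. Subsets with $e\notin A$ contribute $R(G\setminus e;u,v)$ using $r_G(E(G))=r_{G\setminus e}(E(G\setminus e))$ (valid because $e$ is not a bridge). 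For subsets $A$ with $e\in A$, setting $A'=A\setminus\{e\}\subseteq E(G/e)$ and using that contracting a non-loop edge of $G_A$ decreases both its vertex count and its component count by $1$, I obtain $r_{G/e}(A')=r_G(A)-1$ and $r_{G/e}(E(G/e))=r_G(E(G))-1$, so the exponents $r(G)-r(A)$ and $|A|-r(A)$ are unchanged; these terms contribute $R(G/e;u,v)$, and the required additivity follows.

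The bridge case is the subtlest. The same split yields $R(G;u,v)=u\cdot R(G\setminus e;u,v)+R(G/e;u,v)$, so to recover the axiom $xT(G\setminus e;x,y)$ one must separately argue that $R(G\setminus e;u,v)=R(G/e;u,v)$ whenever $e$ is a bridge. This holds because for any $A\subseteq E(G\setminus e)=E(G/e)$ the endpoints of $e$ are already in different components of $G\setminus e$, hence in different components of $(G\setminus e)_A$, so identifying them drops both $|V|$ and $k$ by one and preserves every rank involved. Combining the two identities gives $R(G;u,v)=(u+1)\,R(G\setminus e;u,v)$, i.e.\ the bridge axiom after $u=x-1$. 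The main obstacle is precisely this last step: one has to notice that bridges force the auxiliary identity $R(G\setminus e;\cdot,\cdot)=R(G/e;\cdot,\cdot)$, which fails for loops and for ordinary edges, and this is what makes the sum-of-subsets formula match the axiomatic recursion in all three cases.
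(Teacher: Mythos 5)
Your proposal is correct in substance, but note that the paper itself gives no proof of this theorem at all --- it simply refers the reader to Biggs's \emph{Algebraic Graph Theory} for the equivalence of the two definitions. So there is nothing to compare against except the standard argument, which is exactly what you carry out: establish uniqueness by induction on $|E(G)|$, then verify that $R(G;x-1,y-1)$ satisfies the three axioms by splitting the subset sum according to whether $e\in A$, with the key observation in the bridge case that $R(G\setminus e;u,v)=R(G/e;u,v)$ (so that $u\,R(G\setminus e)+R(G/e)=(u+1)R(G\setminus e)$). That auxiliary identity, and your justification of it via the fact that identifying the two endpoints of a bridge merges two components of every spanning subgraph of $G\setminus e$, is the right crux and is handled correctly. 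One small slip to fix: in the ordinary-edge case you justify $r_{G/e}(A')=r_G(A)-1$ by saying that contracting a non-loop edge of $G_A$ ``decreases both its vertex count and its component count by $1$''; since $e\in A$ joins two vertices already in the same component of $G_A$, the contraction decreases the vertex count by $1$ but leaves the component count \emph{unchanged} --- that is what yields $r_{G/e}(A')=(|V(G)|-1)-k(G_A)=r_G(A)-1$. As written, your stated reason would give $r_{G/e}(A')=r_G(A)$ and contradict the (correct) conclusion you then use. The rest of the computation, including the matching of both exponents and the final substitution $u=x-1$, $v=y-1$, goes through.
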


For more details on the Tutte polynomial and the equivalence of the two definitions, see for example~\cite[Section 9.1]{aigner} or \cite[Chapter 13]{Biggs}. The relation between the Tutte and rank polynomials leads us to the following special values.

\begin{thm}\label{tuttespecial}
For a connected graph $G$, we have
\begin{align*}
T(G;1,1)&=R(G;0,0)=\text{number of spanning trees of $G$},\\
T(G;2,1)&=R(G;1,0)=\text{number of spanning forests of $G$},\\
T(G;1,2)&=R(G;0,1)=\text{number of connected spanning subgraphs of $G$},\\
T(G;2,2)&=R(G;1,1)=2^{|E(G)|}.
\end{align*}
\end{thm}

\section{A trivariate polynomial associated with rooted trees}\label{secroot}

\subsection{Examples and a deletion-contraction algorithm}

We will now investigate the polynomial defined by the recursion~\eqref{eq:PRecursion} for rooted trees. Let us start with some simple examples.

\begin{exa}\label{ex:starpath}
For the star $S_n$ on $n$ vertices rooted at its centre, we have
\[\Pp(S_n;x,y,z)=x^{n-1}+yz^{n-1}.\]
For the path $P_n$ on $n$ vertices rooted at one of its endpoints, we have
\[\Pp(P_n;x,y,z)=x+yz+yz^2+\cdots+yz^{n-1}.\]
\end{exa}

Next we show that our polynomials can also be computed by means of a deletion-contraction recursion in analogy to the Tutte polynomial. Let us define the following operations on rooted trees.
\begin{defn}\label{defn2}
Let $e$ be an edge incident to the root and to a vertex $u_i$, and let $T_i$ be the branch attached to $u_i$. The \emph{contraction} $T/e$ is the tree that results after contracting $e$ to the root. That is, the root and $u_i$ are merged. The \emph{deletion} $T\Ba T_i$ is the tree obtained from $T$ by removing the entire branch $T_i$, including its root $u_i$. The edge between the root and $u_i$ is deleted as well.
\end{defn}

\begin{defn}
A pendant edge is an edge that is incident to a leaf. We call an edge a \emph{bridge} if it is the only edge incident to the root.
\end{defn}

\begin{prop} \label{pro1}
Let $e$ be an edge incident to the root $r$ and a vertex $u_i$. We have
\begin{enumerate}[label=(\alph*)]
\item $\Pp(T;x,y,z)=\Pp(T/e;x,y,z)+yz^{|T|-1}$ if $e$ is a bridge,
\item $\Pp(T;x,y,z)=x\Pp(T/e;x,y,z)+(z-x)yz^{|T|-2}$ if $e$ is a pendant edge,
\item $\Pp(T;x,y,z)=\Pp(T/e;x,y,z)+yz^{|T_i|-1}\Pp(T\Ba T_i;x,y,z)-y(1+y-z)z^{|T|-2}$ 
if $e$ is neither a bridge nor a pendant edge. Here, $T_i$ is the branch rooted at $u_i$.
\end{enumerate}
\end{prop}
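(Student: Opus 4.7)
The plan is to unravel the recursive definition of $\Pp$ in each case and to identify the branch structure of $T/e$ (and, in case (3), also of $T\Ba T_i$). Throughout, write the branches of $T$ at the root as $T_1,\dots,T_k$ and assume that $e$ connects $r$ to the root $u_i$ of $T_i$.

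For case (1), since $e$ is a bridge we have $k=1$, and contracting $e$ identifies $r$ with $u_1$, so $T/e$ is isomorphic to $T_1$ as a rooted tree. Hence $\Pp(T/e;x,y)=\Pp(T_1;x,y)$, and the identity is immediate from the defining recursion $\Pp(T;x,y)=\Pp(T_1;x,y)+y^{|T|-1}$. For case (2), the hypothesis that $e$ is a pendant edge (but not a bridge) forces $T_i$ to be a single vertex, so $\Pp(T_i;x,y)=x$, and $k\geq 2$. Then $T/e$ has branches $\{T_j:j\neq i\}$ at its new root, and since $|T/e|=|T|-1\geq 2$, the recursion yields
\[\Pp(T/e;x,y)=\prod_{j\neq i}\Pp(T_j;x,y)+y^{|T|-2}.\]
Multiplying by $x$ and subtracting $xy^{|T|-2}$ recovers $x\prod_{j\neq i}\Pp(T_j;x,y)=\prod_{j=1}^{k}\Pp(T_j;x,y)$, which together with $y^{|T|-1}$ gives $\Pp(T;x,y)$.

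Case (3) is where the main computational work lies. Let $T_{i,1},\dots,T_{i,m}$ (with $m\geq 1$) denote the branches of $T_i$ at $u_i$. After contracting $e$, the new root has branches $\{T_{i,1},\dots,T_{i,m}\}\cup\{T_j:j\neq i\}$, while $T\Ba T_i$ has branches $\{T_j:j\neq i\}$ and $|T|-|T_i|$ vertices. The key algebraic identity, read off from the defining recursion applied to $T_i$, is
\[\prod_{l=1}^{m}\Pp(T_{i,l};x,y)=\Pp(T_i;x,y)-y^{|T_i|-1}.\]
Substituting this into the recursion for $\Pp(T/e;x,y)$ produces a main term $\prod_{j=1}^{k}\Pp(T_j;x,y)$, a correction term $-y^{|T_i|-1}\prod_{j\neq i}\Pp(T_j;x,y)$, and a residual $y^{|T|-2}$. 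Expanding $y^{|T_i|-1}\Pp(T\Ba T_i;x,y)$ via the recursion similarly yields $y^{|T_i|-1}\prod_{j\neq i}\Pp(T_j;x,y)+y^{|T|-2}$. The two correction terms cancel, the two residual $y^{|T|-2}$ contributions are exactly absorbed by the $-2y^{|T|-2}$ on the right-hand side, and what remains is $\prod_{j=1}^{k}\Pp(T_j;x,y)+y^{|T|-1}=\Pp(T;x,y)$.

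The main obstacle is purely bookkeeping: one must track the additive $y^{n-1}$ correction produced each time the recursion is unfolded, and verify that the signs and exponents on the right-hand side of (3) match all the resulting cross terms. One should also briefly confirm in cases (2) and (3) that the relevant contracted and deleted trees contain at least two vertices, so that the nontrivial clause of the recursion is indeed the one that applies.
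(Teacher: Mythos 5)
Your proposal is correct and follows essentially the same route as the paper's proof: in each case you unfold the defining recursion, identify the branches of $T/e$ (and of $T\backslash T_i$ in case (3)) in terms of those of $T$, and use the identity $\prod_j \Pp(B_j;x,y)=\Pp(T_i;x,y)-y^{|T_i|-1}$ to make the cross terms cancel against $-2y^{|T|-2}$. The only difference is cosmetic bookkeeping (and your explicit remark that the cases are taken to be mutually exclusive, which the paper leaves implicit).
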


\begin{proof}
Let $T_1,\dots,T_k$ be the branches attached to the root $r$.
\begin{enumerate}[label=(\alph*)]
\item If $e$ is a bridge attached to the branch $T_1$, then
\[\Pp(T;x,y,z)=\Pp(T_1;x,y,z)+yz^{|T|-1}=\Pp(T/e;x,y,z)+yz^{|T|-1}\]
by~\eqref{eq:PRecursion}.
\item If $e$ is a pendant edge attached to the single-vertex branch $T_1$, then, again by~\eqref{eq:PRecursion},
\begin{align*}
\Pp(T;x,y,z)-yz^{|T|-1}&=\prod_{i=1}^k \Pp(T_i;x,y,z)=\Pp(T_1;x,y,z)\prod_{i=2}^k \Pp(T_i;x,y,z)\\&=x\prod_{i=2}^k \Pp(T_i;x,y,z)=x(\Pp(T/e;x,y,z)-yz^{|T|-2})\\&=x\Pp(T/e;x,y,z)-xyz^{|T|-2},
\end{align*}
and the stated formula follows.
\item Suppose that $e$ is neither a bridge nor a pendant edge, and suppose that the ends of $e$ are the root and vertex $u_1$, the root of branch $T_1$ (without loss of generality).  Denote the branches of $T_1$ by $B_1,B_2,\dots,B_{\ell}$. We have
\begin{align*}
\Pp(T;x,y,z)-yz^{|T|-1}&=\prod_{i=1}^k \Pp(T_i;x,y,z)=\Pp(T_1;x,y,z)\prod_{i=2}^k \Pp(T_i;x,y,z)\\
&=\Big(\prod_{j=1}^{\ell}\Pp(B_j;x,y,z)+yz^{|T_1|-1}\Big)\prod_{i=2}^k \Pp(T_i;x,y,z)\\
&=\prod_{j=1}^{\ell}\Pp(B_j;x,y,z)\prod_{i=2}^k \Pp(T_i;x,y,z)+yz^{|T_1|-1}\prod_{i=2}^k \Pp(T_i;x,y,z)\\
&=\Pp(T/e;x,y,z)-yz^{|T|-2}+ yz^{|T_1|-1}\big(\Pp(T\Ba T_1;x,y,z)-yz^{|T|-|T_1|-1}\big)\\
&=\Pp(T/e;x,y,z)+yz^{|T_1|-1}\Pp(T\Ba T_1;x,y,z)-y(1+y)z^{|T|-2},
\end{align*}
which proves the third formula.
\end{enumerate}
\end{proof}

\begin{exa}\label{exatree}
For the tree in Figure~\ref{cha.treetree}, we get, using the formulas in Example~\ref{ex:starpath} and Proposition~\ref{pro1},
\begin{align*}
\Pp(T;x,y,z)&=(x^3+yz^3)+yz(x^2+yz^2)-y(1+y-z)z^3 + yz^2(x+yz+yz^2) - y(1+y-z)z^4\\
&=x^3+x^2yz+xyz^2+y^2z^3+yz^5.
\end{align*}
\begin{figure}[htbp]
\centering
\begin{tikzpicture}[scale=0.7]
\node at (-2.6,-1) {$\Pp \Bigg($};
\node[fill=black,circle,inner sep=1pt] (t1) at (-0.45,0) {};
\node[fill=black,circle,inner sep=1pt] (t2) at (-1.45,-1) {};
\node[fill=black,circle,inner sep=1pt] (t3) at (-1.95,-2) {};
\node[fill=black,circle,inner sep=1pt] (t4) at (-0.95,-2) {};
\node[fill=black,circle,inner sep=1pt] (t5) at (0.55,-1) {};
\node[fill=black,circle,inner sep=1pt] (t6) at (0.55,-2) {};
\draw (t1)--(t2)--(t3);
\draw (t2)--(t4);
\draw (t1)--(t5)--(t6);
\node at (-0.25,-1.5) {$T$};
\node at (2.7,-1) {$;x,y,z\Bigg) = \Pp \Bigg($};

\node[fill=black,circle,inner sep=1pt] (s1) at (5.6,0) {};
\node[fill=black,circle,inner sep=1pt] (s2) at (4.6,-1) {};
\node[fill=black,circle,inner sep=1pt] (s3) at (5.6,-1) {};
\node[fill=black,circle,inner sep=1pt] (s4) at (6.6,-1) {};
\node[fill=black,circle,inner sep=1pt] (s5) at (6.6,-2) {};
\draw (s1)--(s4)-- (s5);
\draw (s1)--(s3);
\draw (s1)--(s2);
\node at (9.1,-1) {$;x,y,z\Bigg) + yz^2 \Pp \Bigg($};

\node[fill=black,circle,inner sep=1pt] (u1) at (11.7,0) {};
\node[fill=black,circle,inner sep=1pt] (u2) at (11.7,-1) {};
\node[fill=black,circle,inner sep=1pt] (u3) at (11.7,-2) {};
\draw (u1)--(u2)--(u3);
\node at (15.2,-1) {$;x,y,z\Bigg) - y(1+y-z)z^4$};

\node at (-2.6,-4) {$\Pp \Bigg($};
\node[fill=black,circle,inner sep=1pt] (s01) at (-1.15,-3) {};
\node[fill=black,circle,inner sep=1pt] (s02) at (-2.15,-4) {};
\node[fill=black,circle,inner sep=1pt] (s03) at (-1.15,-4) {};
\node[fill=black,circle,inner sep=1pt] (s04) at (-0.15,-4) {};
\node[fill=black,circle,inner sep=1pt] (s05) at (-0.15,-5) {};
\draw (s01)--(s04)-- (s05);
\draw (s01)--(s03);
\draw (s01)--(s02);
\node at (2,-4) {$;x,y,z\Bigg) = \Pp \Bigg($};

\node[fill=black,circle,inner sep=1pt] (v1) at (5,-3.5) {};
\node[fill=black,circle,inner sep=1pt] (v2) at (4,-4.5) {};
\node[fill=black,circle,inner sep=1pt] (v3) at (5,-4.5) {};
\node[fill=black,circle,inner sep=1pt] (v4) at (6,-4.5) {};
\draw (v1)--(v2);
\draw (v1)--(v3);
\draw (v1)--(v4);
\node at (8.35,-4) {$;x,y,z\Bigg) + yz \Pp \Bigg($};
\node[fill=black,circle,inner sep=1pt] (w1) at (11.1,-3.5) {};
\node[fill=black,circle,inner sep=1pt] (w2) at (10.6,-4.5) {};
\node[fill=black,circle,inner sep=1pt] (w3) at (11.6,-4.5) {};
\draw (w1)--(w2);
\draw (w1)--(w3);
\node at (15,-4) {$;x,y,z\Bigg) - y(1+y-z)z^3$};
\end{tikzpicture}
\caption{Computing $\Pp(T;x,y,z)$ using the deletion-contraction recursion.}
\label{cha.treetree}
\end{figure}
\end{exa}

\subsection{The polynomial $\Pp(T;x,y,z)$ as a generating function}

Next we relate the polynomial $\Pp(T;x,y,z)$ to the structure of the tree by showing that it can be regarded as a generating function for maximal antichains capturing different properties of these antichains.

\begin{defn}
An \emph{antichain} in a rooted tree is a set of vertices with the property that no two vertices in the set lie on a common path from the root. A \emph{maximal antichain} is an antichain that is not a proper subset of another antichain.
\end{defn}

Let $A$ be a maximal antichain in $T$. We write $\ell(A)$ for the number of leaves in $A$.
It is easy to see that $0 \leq \ell(A) \leq |A|$ and
\begin{align*}
\ell(A)=0 &\Longleftrightarrow A \; \text{does not contain any leaf},\\
\ell(A)=|A| &\Longleftrightarrow A \; \text{is the set of all leaves of $T$}.
\end{align*}

\begin{defn}
For a vertex $v$ of a rooted tree $T$, we let $T_v$ be the subtree formed by $v$ and all its successors. Such a tree is called a \emph{fringe subtree} of $T$. Note that the branches of a rooted tree are precisely the fringe subtrees associated with the root's children.
\end{defn}

For a maximal antichain $A$ of $T$, we define $s(A)$ as the number of vertices below the antichain $A$ (total number of successors of vertices in $A$), which can be expressed as
\[s(A)=\sum_{a \in A} (|T_a|-1).\]
Again, one easily finds that $0 \leq s(A) \leq |T|-1$ and
\begin{align*}
s(A)=0 &\Longleftrightarrow A \; \text{is the set of all leaves of $T$},\\
s(A)=|T|-1 &\Longleftrightarrow A \; \text{only consists of the root}.
\end{align*}

Let $\Aa(T)$ be the set of all maximal antichains of $T$.

\begin{thm}\label{thmanti}
For every rooted tree $T$,
\[\Pp(T;x,y,z)=\sum_{A \in \Aa(T)}x^{\ell(A)}y^{|A|-\ell(A)}\,z^{s(A)}.\]
\end{thm}

\begin{proof}
Let us denote the right side of the equation by $R(T;x,y,z)$, i.e.,
\[R(T;x,y,z) = \sum_{A \in \Aa(T)}x^{\ell(A)}y^{|A|-\ell(A)}\,z^{s(A)}.\]
It is straightforward that $R(\bullet;x,y,z)=x=\Pp(\bullet;x,y,z)$, since the only maximal antichain of the single-vertex tree $\bullet$ consists of the root.

Now for $T \neq \bullet$, we have to verify that $R(T;x,y,z)$ satisfies~\eqref{eq:PRecursion}. Let $T_1,\dots,T_k$ be the branches attached to the root. Every maximal antichain of $T$ is either a union of maximal antichains in the branches or consists only of the root:
\[\Aa(T)=\{A_1\cup A_2\cup \dots \cup A_k \,:\, A_i \in \Aa(T_i) \text{ for } 1\leq i \leq k\}\cup\{\{r\}\}.\]
Note that the number of leaves $\ell(A)$ in a maximal antichain $A$ is the sum of the number of leaves in its parts $A_i$:
\[\ell(A) = \ell(A_1) + \ell(A_2) + \cdots + \ell(A_k).\]
The same applies to the number of successors $s(A)$. Thus,
\begin{align*}
R(T;x,y,z)&=\sum_{A \in \Aa(T)}x^{\ell(A)}\,y^{|A|-\ell(A)}z^{s(A)}\\
&= \sum_{A_1 \in \Aa(T_1)} \sum_{A_2 \in \Aa(T_2)} \cdots \sum_{A_k \in \Aa(T_k)} x^{\ell(A_1) + \cdots + \ell(A_k)} y^{|A_1| - \ell(A_1) + \cdots + |A_k|  - \ell(A_k)} z^{s(A_1) + \cdots + s(A_k)} \\
&\quad +x^{\ell(\{r\})}\,y^{|\{r\}| - \ell(\{r\})}z^{s(\{r\})}\\
&=\prod_{i=1}^k\Big(\sum_{A_i \in \Aa(T_i)}x^{\ell(A_i)}\,y^{|A_i|-\ell(A_i)}z^{s(A_i)}\Big)+x^0\,y^{1-0}z^{|T|-1}\\
&=\prod_{i=1}^k R(T_i;x,y,z)+yz^{|T|-1}.
\end{align*} 
\end{proof}

Next we show that several special values of the polynomial $\Pp(T;x,y,z)$ have a natural combinatorial interpretation. Let us first consider the special values that result when one of the variables is equal to $0$:

\begin{cor}\label{cor:yz0}
We have
\[\Pp(T;x,y,0) = \Pp(T;x,0,z) = x^{\text{number of leaves in $T$}}.\]
\end{cor}

\begin{proof}
We use the representation of $\Pp(T;x,y,z)$ in Theorem~\ref{thmanti}. Note that all contribution to the sum that is nonzero when $y = 0$ or $z=0$ stems from the maximal antichain that consists of all leaves. The statement follows immediately.
\end{proof}

\begin{cor}\label{cor:x0}
Let $\Aa^*(T)$ be the set of maximal antichains of $T$ that do not contain a leaf. We have
\[\Pp(T;0,y,z) = \sum_{A \in \Aa^*(T)}y^{|A|}\,z^{s(A)}.\]
In particular, $\Pp(T;0,1,1)$ is the number of maximal antichains of $T$ that do not contain a leaf.
\end{cor}

\begin{proof}
This is also immediate from Theorem~\ref{thmanti}. 
\end{proof}

Next, we have a number of special values reminiscent of Theorem~\ref{tuttespecial} for the Tutte polynomial.
Let us define a \emph{cutset} (or transversal) in a rooted tree as a set of vertices that meets every path from the root to a leaf.

\begin{thm}\label{thmeval}
We have the following special values:
\begin{align*}
\Pp(T;1,1,1) &= \text{number of maximal antichains of $T$},\\
\Pp(T;2,1,1) &=\text{number of antichains of $T$ (including $\emptyset$)},\\
\Pp(T;1,1,2) &=\text{number of cutsets of $T$},\\
\Pp(T;2,1,2) &=2^{|T|}.
\end{align*}
\end{thm}

\begin{proof}
The first item is straightforward from Theorem~\ref{thmanti}.

The second item comes from the fact that every antichain can be obtained uniquely from a maximal antichain by removing a subset (possibly empty) of the leaves contained in it. Alternatively, one can also use the recursion, noting that an antichain is either a union of antichains in the branches, or consists only of the root.

The third formula is due to the fact that a cutset can be formed uniquely from a maximal antichain by including a subset (possibly empty) of the successors. From a recursive perspective, note that a cutset can either be any set that contains the root (of which there are $2^{|T|-1}$), or a union of cutsets in the branches. Hence, the number of cutsets satisfies recursion~\eqref{eq:PRecursion} with $x=y=1$ and $z=2$.

The last item is easily proven by induction: if holds when $T$ is a single vertex. If $T$ has branches $T_1,T_2,\ldots,T_k$, then we get from \eqref{eq:PRecursion} and the induction hypothesis that
\[\Pp(T;2,1,2) = \prod_{i=1}^k \Pp(T_i;2,1,2) + 2^{|T|-1} =2^{\sum_{i=1}^k |T_i|} + 2^{|T|-1} = 2^{|T|-1} + 2^{|T|-1} = 2^{|T|}.\]
\end{proof}

\begin{rem}\label{remideal}
For every subtree $S$ of $T$ that contains the root, the leaves of $S$ form an antichain. Conversely, given an antichain, the union of the paths from the root is a subtree that contains the root. Thus $\Pp(T;2,1,1)$ is also the number of subtrees containing the root.
\end{rem}

\begin{rem}
The final identity of Theorem~\ref{thmeval} can in fact be generalised in a straightforward fashion to
\[\Pp(T;1+y,y,1+y) =(1+y)^{|T|}.\]
\end{rem}

\subsection{Independent sets and matchings}

Our next result relates the polynomial $\Pp(T;x,y,z)$ to maximum matchings and maximum independent sets. Recall that a \emph{matching} is a set of edges with the property that no two of them share a vertex. An \emph{independent set} is a set of vertices with the property that no two of them are adjacent. The greatest possible cardinality of a matching in a graph $G$ is called the \emph{matching number} and denoted by $\mu(G)$; a matching of that cardinality is also called a \emph{maximum matching}. Likewise, the maximum cardinality of an independent set is the \emph{independence number} $\alpha(G)$, and an independent set of that cardinality is called a \emph{maximum independent set}.

\begin{thm}
Let $T$ be a rooted tree. The following three statements are equivalent:
\begin{enumerate}[label=(\alph*)]
\item Every maximum independent set of $T$ contains the root.
\item There exists a maximum matching of $T$ that does not cover the root (i.e., does not contain an edge that is incident with the root).
\item $\Pp(T;0,-1,-1) = 0$.
\end{enumerate}
If the three statements do not hold, then $\Pp(T;0,-1,-1) = (-1)^{|T|}$.
\end{thm}

\begin{proof}
We first show that the first two items are equivalent: let $r$ be the root. By König's theorem \cite[Theorem 2.1.1]{diestel}, $\mu(T)$ is also the minimum cardinality of a vertex cover, and since vertex covers are precisely the complements of independent sets, $\alpha(T) + \mu(T) = |T|$. Likewise, $\alpha(T \Ba r) + \mu(T \Ba r) = |T \Ba r|  = |T| - 1$.

Clearly, $\alpha(T \Ba r) \leq \alpha(T) \leq \alpha(T \Ba r) + 1$. If there is a maximum independent set of $T$ that does not contain the root, then $\alpha(T \Ba r) = \alpha(T)$, and vice versa. So statement (a) is equivalent to $\alpha(T) = \alpha(T \Ba r) + 1$, which is in turn equivalent to $\mu(T) = \mu(T \Ba r)$.

Similarly, we have $\mu(T \Ba r) \leq \mu(T) \leq \mu(T \Ba r) + 1$. If there is a maximum matching of $T$ that does not cover the root, then $\mu(T \Ba r) = \mu(T)$, and vice versa. So we have shown that (a) and (b) are equivalent.

Now let us call a rooted tree a \emph{type A} tree if it satisfies (a) and (b); otherwise we say it is of \emph{type B}. It was shown in \cite[Lemma 3.3]{lehner} that $T$ is of type A if and only if all its branches are of type B. We now prove by induction on the number of vertices that $T$ is of type A if and only if $\Pp(T;0,-1,-1) = 0$, and of type B if and only if $\Pp(T;0,-1,-1) = (-1)^{|T|}$. This holds for the single-vertex tree, which is of type A and satisfies $\Pp(\bullet;0,-1,-1) = 0$. For the induction step, note that we have, by~\eqref{eq:PRecursion},
\[\Pp(T;0,-1,-1) = \prod_{i=1}^k \Pp(T_i;0,-1,-1) + (-1)^{|T|}.\]
The product is $0$ if at least one of the factors is, or equivalently, if at least one branch $T_i$ is of type A. In this case, $T$ is of type B and $\Pp(T;0,-1,-1) = (-1)^{|T|}$. Otherwise, $T$ is of type A and
\[\Pp(T;0,-1,-1) = \prod_{i=1}^k (-1)^{|T_i|} + (-1)^{|T|} = (-1)^{|T|-1} + (-1)^{|T|} = 0.\]
This completes the induction.
\end{proof}

Note that
\begin{align*}
\mu(T) &= \mu(T \Ba r) + \begin{cases} 0 & T \text{ is of type A,} \\ 1 & T \text{ is of type B,} \end{cases} \\ 
&= \sum_{i=1}^k \mu(T_i) + (-1)^{|T|} \Pp(T;0,-1,-1).
\end{align*}

By induction on the number of vertices, the following formula for the matching number now follows easily.

\begin{cor}
For every rooted tree $T$, we have
\[\mu(T) = \sum_{v \in T} (-1)^{|T_v|} \Pp(T_v;0,-1,-1).\]
The sum is over all fringe subtrees $T_v$ of $T$.
\end{cor}

\subsection{Random removal of vertices or edges}

As a final result on special values of $\Pp(T;x,y,z)$, we consider a simple probabilistic question: suppose that each vertex of the tree $T$ is removed with probability $p$. What is the probability that at least one complete path from the root to a leaf of $T$ remains? The answer is a polynomial in $p$, similar to the \emph{reliability polynomial} \cite{chari}, which can be expressed in terms of the Tutte polynomial. As we will see, the probability can be expressed in terms of $\Pp(T;x,y,z)$.

\begin{prop}\label{prop:probability1}
Let $p \in (0,1)$ be a fixed probability. If each vertex of the tree $T$ is removed independently with probability $p$, the probability that no complete path from the root to a leaf remains is
\[(1-p)^{|T|} \Pp \Big(T; \frac{p}{1-p}, \frac{p}{1-p}, \frac{1}{1-p} \Big).\]
\end{prop}

\begin{proof}
Let the probability in question be denoted by $Q(T)$. If the root is removed, then clearly there is no complete path from the root to a leaf. Otherwise, there is no such path if and only if there is none in any of the branches. Hence, we have
\[Q(T) = p + (1-p) \prod_{i=1}^k Q(T_i).\]
Divide both sides by $(1-p)^{|T|}$ and use the fact that $\prod_{i=1}^k (1-p)^{|T_i|} =(1-p)^{\sum_{i=1}^k |T_i|} = (1-p)^{|T|-1}$:
\[Q(T) (1-p)^{-|T|} = p (1-p)^{-|T|} + \prod_{i=1}^k Q(T_i) (1-p)^{-|T_i|}.\]
Thus $Q(T) (1-p)^{-|T|}$ satisfies the recursion~\eqref{eq:PRecursion} with $y = p/(1-p)$, $z = 1/(1-p)$ and initial value $x = Q(\bullet) (1-p)^{-1} = p/(1-p)$. This proves the statement.
\end{proof}

One can also consider the following dual question.

\begin{prop}
Let $p \in (0,1)$ be a fixed probability. If each edge of the tree $T$ is removed with probability $p$, the probability that no complete path from the root to a leaf remains is
\[(1-p)^{|T|-1} \Pp \Big(T; \frac{p}{1-p}, \frac{p}{1-p}, \frac{1}{1-p} \Big) - \frac{p}{1-p}.\]
\end{prop}

Motivated by this type of probabilistic question, Burghart \cite{burghart} also considered a tree polynomial that can be expressed as $1-x^{|T|} \Pp(T; \frac{1-x}{x}, \frac{1-x}{x}, \frac{1}{x})$ and has interesting applications of its own right.

\medskip

It is natural to ask to what extent a polynomial associated with a tree determines the tree. As mentioned before, the main result of \cite{liu2021tree} states that $T$ is uniquely determined by the bivariate polynomial $\Pp(T;x,y,1)$. Thus, the trivariate polynomial $\Pp(T;x,y,z)$ \emph{a fortiori} determines $T$ uniquely.

\begin{figure}[htbp]
\centering
\begin{tikzpicture}[scale=0.7]
\node[fill=black,circle,inner sep=1pt] (t1) at (0,0) {};
\node[fill=black,circle,inner sep=1pt] (t2) at (0,-1) {};
\node[fill=black,circle,inner sep=1pt] (t3) at (-1,-2) {};
\node[fill=black,circle,inner sep=1pt] (t4) at (0,-2) {};
\node[fill=black,circle,inner sep=1pt] (t5) at (1,-2) {};
\node[fill=black,circle,inner sep=1pt] (t6) at (-1,-3) {};
\node[fill=black,circle,inner sep=1pt] (t7) at (0,-3) {};
\node[fill=black,circle,inner sep=1pt] (t8) at (1,-3) {};
\draw (t1)--(t2);
\draw (t2)--(t3)--(t6);
\draw (t2)--(t4)--(t7);
\draw (t2)--(t5)--(t8);
\node at (0,-3.5) {$T_1$};
\node[fill=black,circle,inner sep=1pt] (s1) at (3,0) {};
\node[fill=black,circle,inner sep=1pt] (s2) at (2,-1) {};
\node[fill=black,circle,inner sep=1pt] (s3) at (4,-1) {};
\node[fill=black,circle,inner sep=1pt] (s4) at (2,-2) {};
\node[fill=black,circle,inner sep=1pt] (s5) at (2,-3) {};
\node[fill=black,circle,inner sep=1pt] (s6) at (3.5,-2) {};
\node[fill=black,circle,inner sep=1pt] (s7) at (3.5,-3) {};
\node[fill=black,circle,inner sep=1pt] (s8) at (4.5,-2) {};
\draw (s1)--(s2)--(s4)-- (s5);
\draw (s1)--(s3)--(s8);
\draw (s3)--(s6)--(s7);
\node at (3,-3.5) {$T_2$};
\node[fill=black,circle,inner sep=1pt] (x1) at (6,0) {};
\node[fill=black,circle,inner sep=1pt] (x2) at (6,-1) {};
\node[fill=black,circle,inner sep=1pt] (x3) at (5,-2) {};
\node[fill=black,circle,inner sep=1pt] (x4) at (7,-2) {};
\draw (x1)--(x2)--(x3);
\draw (x2)--(x4);
\node at (6,-2.5) {$T_3$};
\node[fill=black,circle,inner sep=1pt] (y1) at (9,0) {};
\node[fill=black,circle,inner sep=1pt] (y2) at (8,-1) {};
\node[fill=black,circle,inner sep=1pt] (y3) at (8,-2) {};
\node[fill=black,circle,inner sep=1pt] (y4) at (10,-1) {};
\draw (y3)--(y2)--(y1)--(y4);
\node at (9,-2.5) {$T_4$};
\node[fill=black,circle,inner sep=1pt] (u1) at (12,0) {};
\node[fill=black,circle,inner sep=1pt] (u2) at (11,-1) {};
\node[fill=black,circle,inner sep=1pt] (u3) at (13,-1) {};
\node[fill=black,circle,inner sep=1pt] (u4) at (11,-2) {};
\node[fill=black,circle,inner sep=1pt] (u5) at (11,-3) {};
\node[fill=black,circle,inner sep=1pt] (u6) at (11,-4) {};
\draw (u1)--(u2)--(u4)-- (u5)--(u6);
\draw (u1)--(u3);
\node at (12,-4.5) {$T_5$};
\node[fill=black,circle,inner sep=1pt] (v1) at (15,0) {};
\node[fill=black,circle,inner sep=1pt] (v2) at (14,-1) {};
\node[fill=black,circle,inner sep=1pt] (v3) at (16,-1) {};
\node[fill=black,circle,inner sep=1pt] (v4) at (13.5,-2) {};
\node[fill=black,circle,inner sep=1pt] (v5) at (14.5,-2) {};
\node[fill=black,circle,inner sep=1pt] (v6) at (16,-2) {};
\draw (v1)--(v2)--(v4);
\draw (v1)--(v3)--(v6);
\draw (v2)--(v5);
\node at (15,-2.5) {$T_6$};
\end{tikzpicture}
\caption{Pairs of nonisomorphic rooted trees.}
\label{cha.iso}
\end{figure}

However, none of the single-variable polynomials $\Pp(x,1,1)$, $\Pp(1,y,1)$ and $\Pp(1,1,z)$ suffices to distinguish trees uniquely, as the following examples show. In each case, they are the smallest possible examples, as a computer search exhibits.
Let us consider the trees in Figure~\ref{cha.iso}. The polynomials evaluate as follows:

\begin{align*}
\Pp(T_1;x,1,1)&=\Pp(T_2;x,1,1)=x^3+3x^2+3x+3,\\
\Pp(T_3;1,y,1)&=\Pp(T_4;1,y,1)=2y+1,\\
\Pp(T_5;1,1,z)&=\Pp(T_6;1,1,z)=z^5+z^3+z^2+z+1.
\end{align*}

However, note that
\begin{align*}
\Pp(T_1;x,y,z)&= yz^7+yz^6+y^3z^3+3xy^2z^2+3x^2yz+x^3,\\
\Pp(T_2;x,y,z)&= yz^7+y^2z^5+y^2z^4+xy^2z^3 + xyz^3 + xy^2z^2 + x^2yz^2 + 2x^2yz + x^3,\\
\Pp(T_3;x,y,z)&= yz^3+yz^2+x^2,\\
\Pp(T_4;x,y,z)&= yz^3+xyz+x^2,\\
\Pp(T_5;x,y,z)&= yz^5+xyz^3 + xy z^2 + x yz + x^2,\\
\Pp(T_6;x,y,z)&= yz^5+y^2z^3+xyz^2+x^2yz+x^3,
\end{align*}
so we have $\Pp(T_1;x,y,z) \neq \Pp(T_2;x,y,z)$, $\Pp(T_3;x,y,z) \neq \Pp(T_4;x,y,z)$ and $\Pp(T_5;x,y,z) \neq \Pp(T_6;x,y,z)$.
This leaves us with bivariate specialisations of $\Pp(T;x,y,z)$. We will show next that the bivariate polynomial $\Pp(T;1,y,z)$ determines $T$ uniquely. First we prove irreducibility of this polynomial, which is also the key step in \cite{liu2021tree} to prove the analogous statement for $\Pp(T;x,y,1)$:

\begin{lem}
For every rooted tree $T$ with more than one vertex, the polynomial $\Pp(T;1,y,z)$ is irreducible.
\end{lem}

\begin{proof}
Let $T$ have $n$ vertices. We first observe that the degree of $\Pp(T;1,y,z)$ as a polynomial in $z$ is always $n-1$, and that the term of highest degree is $yz^{n-1}$ if $n \geq 2$ (and $1$ if $n=1$). This is clear from Theorem~\ref{thmanti}: $s(A)$ is the number of vertices below $A$, which is clearly at most $n-1$, with equality if and only if $A$ consists only of the root. Next note that
$\Pp(T;1,0,z) = 1$ by Corollary~\ref{cor:yz0}, which means that all but the constant term $1$ are divisible by $y$. Thus Eisenstein's criterion \cite[Chapter IV, Theorem 3.1]{lang} applies to the ``reversed'' polynomial $z^{n-1} \Pp(T;1,y,1/z)$: its degree (regarded as a polynomial in $z$) is still $n-1$, the leading term is $z^{n-1}$, all other terms are divisible by $y$, but the coefficient of $z^0$ is $y$ and thus not divisible by $y^2$. So $z^{n-1} \Pp(T;1,y,1/z)$, and thus also the original polynomial $\Pp(T;1,y,z)$, is irreducible.
\end{proof}

Now the following statement is an easy consequence:

\begin{thm}\label{thm:uniquely_det}
The bivariate polynomial $\Pp(T;1,y,z)$ determines $T$ uniquely: if we have $\Pp(T;1,y,z) = \Pp(T';1,y,z)$, then $T$ and $T'$ are isomorphic.
\end{thm}

\begin{proof}
Note first that $T$ and $T'$ have the same number of vertices $n$: otherwise, the two polynomials would have different degrees (in $z$). We proceed by induction on $n$. The initial case $n=1$ is trivial.

Now let $T_1,T_2,\ldots,T_k$ and $T_1',T_2',\ldots,T_{\ell}'$ be the branches of $T$ and $T'$ respectively. It follows from the recursion~\eqref{eq:PRecursion} now that
\begin{equation}\label{eq:productsequal}
\prod_{i=1}^k \Pp(T_i;1,y,z) =  \prod_{i=1}^{\ell} \Pp(T_j';1,y,z).
\end{equation}
Since the factors on both sides are irreducible polynomials, they must be the same up to permutation, with one exception: factors where $T_i$ ($T_j'$, respectively) is a single vertex are equal to $1$, and their number is a priori not uniquely determined by~\eqref{eq:productsequal}. However, by the induction hypothesis, we can infer that the branches of $T$ and $T'$ that have more than one vertex are the same (up to isomorphism). Since $T$ and $T'$ also have the same number of vertices, the number of single-vertex branches must also be the same. Hence $T$ and $T'$ are isomorphic, completing the proof.
\end{proof}

Interestingly, the third possible two-variable specialisation $\Pp(T;x,1,z)$ does \emph{not} uniquely determine trees. We will exhibit this fact by means of a counterexample (the smallest counterexample, in fact, as can be verified by a computer search). 

\begin{figure}[htbp]
\centering
\begin{tikzpicture}[scale=0.7]
\node[fill=black,circle,inner sep=1.5pt]  at (0,0) {};
\node[fill=black,circle,inner sep=1.5pt]  at (-1.5,-1) {};
\node[fill=black,circle,inner sep=1.5pt]  at (0,-1) {};
\node[fill=black,circle,inner sep=1.5pt]  at (1.5,-1) {};
\node[fill=black,circle,inner sep=1.5pt]  at (-1.5,-2) {};
\node[fill=black,circle,inner sep=1.5pt]  at (-0.5,-2) {};
\node[fill=black,circle,inner sep=1.5pt]  at (0.5,-2) {};
\node[fill=black,circle,inner sep=1.5pt]  at (-1.5,-3) {};
\node[fill=black,circle,inner sep=1.5pt]  at (-0.5,-3) {};
\node[fill=black,circle,inner sep=1.5pt]  at (-2.5,-4) {};
\node[fill=black,circle,inner sep=1.5pt]  at (-1.5,-4) {};
\node[fill=black,circle,inner sep=1.5pt]  at (-0.5,-4) {};
\node[fill=black,circle,inner sep=1.5pt]  at (-2.5,-5) {};
\node[fill=black,circle,inner sep=1.5pt]  at (-1.5,-5) {};

\draw (0,0)--(-1.5,-1);
\draw (0,0)--(0,-1);
\draw (0,0)--(1.5,-1);
\draw (-1.5,-1)--(-1.5,-3);
\draw (0,-1)--(-0.5,-2);
\draw (0,-1)--(0.5,-2);
\draw (-0.5,-2)--(-0.5,-3);
\draw (-1.5,-3)--(-2.5,-4);
\draw (-2.5,-4)--(-2.5,-5);
\draw (-1.5,-3)--(-1.5,-5);
\draw (-1.5,-3)--(-0.5,-4);

\node at (0,-5) {$B_1$};

\node[fill=black,circle,inner sep=1.5pt]  at (4,0) {};
\node[fill=black,circle,inner sep=1.5pt]  at (3.5,-1) {};
\node[fill=black,circle,inner sep=1.5pt]  at (5,-1) {};
\node[fill=black,circle,inner sep=1.5pt]  at (3,-2) {};
\node[fill=black,circle,inner sep=1.5pt]  at (4.5,-2) {};
\node[fill=black,circle,inner sep=1.5pt]  at (2.5,-3) {};
\node[fill=black,circle,inner sep=1.5pt]  at (3.5,-3) {};
\node[fill=black,circle,inner sep=1.5pt]  at (4.5,-3) {};
\node[fill=black,circle,inner sep=1.5pt]  at (2.5,-4) {};
\node[fill=black,circle,inner sep=1.5pt]  at (4.5,-4) {};

\draw (4,0)--(5,-1);
\draw (4,0)--(2.5,-3);
\draw (2.5,-3)--(2.5,-4);
\draw (3,-2)--(3.5,-3);
\draw (3.5,-1)--(4.5,-2);
\draw (4.5,-2)--(4.5,-4);

\node at (4,-5) {$B_2$};

\node[fill=black,circle,inner sep=1.5pt]  at (7,0) {};
\node[fill=black,circle,inner sep=1.5pt]  at (7,-1) {};
\node[fill=black,circle,inner sep=1.5pt]  at (6,-2) {};
\node[fill=black,circle,inner sep=1.5pt]  at (7,-2) {};
\node[fill=black,circle,inner sep=1.5pt]  at (8,-2) {};
\node[fill=black,circle,inner sep=1.5pt]  at (6,-3) {};
\node[fill=black,circle,inner sep=1.5pt]  at (7,-3) {};

\draw (7,0)--(7,-3);
\draw (7,-1)--(6,-2);
\draw (7,-1)--(8,-2);
\draw (6,-2)--(6,-3);

\node at (7,-4) {$B_3$};

\node[fill=black,circle,inner sep=1.5pt]  at (9,0) {};
\node[fill=black,circle,inner sep=1.5pt]  at (9,-1) {};
\node[fill=black,circle,inner sep=1.5pt]  at (9,-2) {};

\draw (9,0)--(9,-2);

\node at (9,-3) {$B_4$};

\end{tikzpicture}
\caption{Trees $B_1,B_2,B_3,B_4$.}\label{fig:4trees}
\end{figure}

Consider the trees $B_1,B_2,B_3,B_4$ in Figure~\ref{fig:4trees}. One verifies that
\begin{align*}
\Pp(B_1;x,1,z) &= (x^2-xz^2+xz+z^4) (x^4+x^3z^2+2x^3z+2x^2z^3+x^2z^2+xz^7+xz^5+xz^4+z^9) \\
&= x^6 + 3 x^5 z + x^4 z^3 + 3 x^4 z^2 + x^3 z^7 + x^3 z^6 + x^3 z^5 + 2 x^3 z^4 + x^3 z^3 + x^2 z^8 + x^2 z^7 \\
&\quad + x^2 z^6 + x^2 z^5 + x z^{10} + x z^9 + x z^8 + z^{13}, \\
\Pp(B_2;x,1,z) &= x^4+x^3z^2+2x^3z+2x^2z^3+x^2z^2+xz^7+xz^5+xz^4+z^9, \\
\Pp(B_3;x,1,z) &= (x+z^2+z)(x^2-xz^2+xz+z^4) \\
&= x^3 + 2 x^2 z + x z^2 + z^6 + z^5, \\
\Pp(B_4;x,1,z) &= x+z^2+z.
\end{align*}

Note that $\Pp(B_1;x,1,z) \Pp(B_4;x,1,z) = \Pp(B_2;x,1,z) \Pp(B_3;x,1,z)$. It follows that the two rooted $18$-vertex trees $A_1$ and $A_2$ whose branches are $B_1,B_4$ and $B_2,B_3$ respectively (see Figure~\ref{fig:2trees}) have the same associated polynomial:
\begin{align*}
\Pp(A_1;x,1,z) &= \Pp(A_2;x,1,z) \\
&= \Pp(B_1;x,1,z) \Pp(B_4;x,1,z) + z^{17} = \Pp(B_2;x,1,z) \Pp(B_3;x,1,z) + z^{17} \\
&= x^7+x^6 z^2+4 x^6 z+4 x^5 z^3+6 x^5 z^2+x^4 z^7+x^4 z^6+2 x^4 z^5+6 x^4 z^4+4 x^4
   z^3 \\ 
&\quad + x^3 z^9 +3 x^3 z^8+3 x^3 z^7+4 x^3 z^6+4 x^3 z^5+x^3 z^4+2 x^2 z^{10}+3 x^2
   z^9+3 x^2 z^8 \\ 
&\quad +2 x^2 z^7+x^2 z^6 +x z^{13}+x z^{12}+2 x z^{11}+2 x z^{10}+x
   z^9+z^{17}+z^{15}+z^{14}.
\end{align*}

\begin{figure}[htbp]
\centering
\begin{tikzpicture}[scale=0.7]
\node[fill=black,circle,inner sep=1.5pt]  at (0,0) {};
\node[fill=black,circle,inner sep=1.5pt]  at (-1.5,-1) {};
\node[fill=black,circle,inner sep=1.5pt]  at (0,-1) {};
\node[fill=black,circle,inner sep=1.5pt]  at (1.5,-1) {};
\node[fill=black,circle,inner sep=1.5pt]  at (-1.5,-2) {};
\node[fill=black,circle,inner sep=1.5pt]  at (-0.5,-2) {};
\node[fill=black,circle,inner sep=1.5pt]  at (0.5,-2) {};
\node[fill=black,circle,inner sep=1.5pt]  at (-1.5,-3) {};
\node[fill=black,circle,inner sep=1.5pt]  at (-0.5,-3) {};
\node[fill=black,circle,inner sep=1.5pt]  at (-2.5,-4) {};
\node[fill=black,circle,inner sep=1.5pt]  at (-1.5,-4) {};
\node[fill=black,circle,inner sep=1.5pt]  at (-0.5,-4) {};
\node[fill=black,circle,inner sep=1.5pt]  at (-2.5,-5) {};
\node[fill=black,circle,inner sep=1.5pt]  at (-1.5,-5) {};
\node[fill=black,circle,inner sep=1.5pt] at (1.5,1) {};
\node[fill=black,circle,inner sep=1.5pt]  at (3,0) {};
\node[fill=black,circle,inner sep=1.5pt]  at (3,-1) {};
\node[fill=black,circle,inner sep=1.5pt]  at (3,-2) {};

\draw (0,0)--(-1.5,-1);
\draw (0,0)--(0,-1);
\draw (0,0)--(1.5,-1);
\draw (-1.5,-1)--(-1.5,-3);
\draw (0,-1)--(-0.5,-2);
\draw (0,-1)--(0.5,-2);
\draw (-0.5,-2)--(-0.5,-3);
\draw (-1.5,-3)--(-2.5,-4);
\draw (-2.5,-4)--(-2.5,-5);
\draw (-1.5,-3)--(-1.5,-5);
\draw (-1.5,-3)--(-0.5,-4);
\draw (1.5,1)--(0,0);
\draw (1.5,1)--(3,0);
\draw (3,0)--(3,-2);

\node at (1,-5) {$A_1$};

\node[fill=black,circle,inner sep=1.5pt]  at (7,0) {};
\node[fill=black,circle,inner sep=1.5pt]  at (6.5,-1) {};
\node[fill=black,circle,inner sep=1.5pt]  at (8,-1) {};
\node[fill=black,circle,inner sep=1.5pt]  at (6,-2) {};
\node[fill=black,circle,inner sep=1.5pt]  at (7.5,-2) {};
\node[fill=black,circle,inner sep=1.5pt]  at (5.5,-3) {};
\node[fill=black,circle,inner sep=1.5pt]  at (6.5,-3) {};
\node[fill=black,circle,inner sep=1.5pt]  at (7.5,-3) {};
\node[fill=black,circle,inner sep=1.5pt]  at (5.5,-4) {};
\node[fill=black,circle,inner sep=1.5pt]  at (7.5,-4) {};
\node[fill=black,circle,inner sep=1.5pt]  at (8.5,1) {};
\node[fill=black,circle,inner sep=1.5pt]  at (10,0) {};
\node[fill=black,circle,inner sep=1.5pt]  at (10,-1) {};
\node[fill=black,circle,inner sep=1.5pt]  at (9,-2) {};
\node[fill=black,circle,inner sep=1.5pt]  at (10,-2) {};
\node[fill=black,circle,inner sep=1.5pt]  at (11,-2) {};
\node[fill=black,circle,inner sep=1.5pt]  at (9,-3) {};
\node[fill=black,circle,inner sep=1.5pt]  at (10,-3) {};

\draw (7,0)--(8,-1);
\draw (7,0)--(5.5,-3);
\draw (5.5,-3)--(5.5,-4);
\draw (6,-2)--(6.5,-3);
\draw (6.5,-1)--(7.5,-2);
\draw (7.5,-2)--(7.5,-4);
\draw (8.5,1)--(7,0);
\draw (8.5,1)--(10,0);
\draw (10,0)--(10,-3);
\draw (10,-1)--(9,-2);
\draw (10,-1)--(11,-2);
\draw (9,-2)--(9,-3);

\node at (8,-5) {$A_2$};

\end{tikzpicture}
\caption{Trees $A_1$ and $A_2$.}\label{fig:2trees}
\end{figure}

Once a pair of nonisomorphic trees with the same associated polynomial has been found, the following corollary is an easy consequence:

\begin{cor}\label{cor:noniso}
For every positive integer $k$, there exists a $k$-tuple of nonisomorphic rooted trees $T_1,T_2,\ldots,T_k$ such that
\begin{equation}\label{eq:k-tuple}
\Pp(T_1;x,1,z) = \Pp(T_2;x,1,z) = \cdots = \Pp(T_k;x,1,z).
\end{equation}
Moreover, the proportion of rooted trees $T$ that are not uniquely determined by $\Pp(T;x,1,z)$ among all rooted trees with $n$ vertices tends to $1$ as $n \to \infty$.
\end{cor}

\begin{proof}
To prove the first part, we can for example take $T_j$ to be the tree consisting of $j-1$ copies of $A_1$ and $k-j$ copies of $A_2$, attached to a common root. The resulting trees $T_1,T_2,\ldots,T_k$ are clearly nonisomorphic, and~\eqref{eq:k-tuple} follows from~\eqref{eq:PRecursion}.

For the second part, we use a classical argument due to Schwenk \cite{schwenk} who proved the analogous statement for the spectrum (equivalently, the characteristic polynomial) of the adjacency matrix. Due to the recursion for the polynomial $\Pp$, the polynomial $\Pp(T;x,1,z)$ does not change if a fringe subtree of $T$ that is isomorphic to $A_1$ is replaced by a fringe subtree that is isomorphic to $A_2$. The proportion of rooted trees that contain $A_1$ as a fringe subtree tends to $1$ as $n \to \infty$; in fact, the number of copies of $A_1$ satisfies a central limit theorem, see e.g. \cite[Section 3]{wagneradditive}. Each such tree $T$ is not uniquely determined by $\Pp(T;x,1,z)$, since one can replace the copy of $A_1$ by $A_2$ to obtain a nonisomorphic tree with the same associated polynomial. This completes the proof.
\end{proof}

The same also applies to another natural two-variable specialisation of $\Pp(T;x,y,z)$, namely
\[\Pp(T;x,x,z) = \sum_{A \in \Aa(T)}x^{|A|}\,z^{s(A)}\]
(the sum representation follows from Theorem~\ref{thmanti}). Again, one can find two nonisomorphic trees with the same associated polynomial, see Figure~\ref{fig:2trees2}:
\[\Pp(A_3;x,x,z) = \Pp(A_4;x,x,z) = x z^8 +  x^2 z^6 + x^2 z^5 +  x^3 z^4 + 2 x^3 z^3 + x^3 z^2 + x^4 z + x^4.\]
Thus the analogue of Corollary~\ref{cor:noniso} holds as well (as it does for the univariate polynomials $\Pp(T;x,1,1)$, $\Pp(T;1,y,1)$ and $\Pp(T;1,1,z)$).

\begin{figure}[htbp]
\centering
\begin{tikzpicture}[scale=0.7]
\node[fill=black,circle,inner sep=1.5pt]  at (0,0) {};
\node[fill=black,circle,inner sep=1.5pt]  at (-1.5,-1) {};
\node[fill=black,circle,inner sep=1.5pt]  at (1.5,-1) {};
\node[fill=black,circle,inner sep=1.5pt]  at (-1.5,-2) {};
\node[fill=black,circle,inner sep=1.5pt]  at (0.5,-2) {};
\node[fill=black,circle,inner sep=1.5pt]  at (2.5,-2) {};
\node[fill=black,circle,inner sep=1.5pt]  at (2.5,-3) {};
\node[fill=black,circle,inner sep=1.5pt]  at (1.5,-4) {};
\node[fill=black,circle,inner sep=1.5pt]  at (3.5,-4) {};

\draw (0,0)--(-1.5,-1);
\draw (0,0)--(1.5,-1);
\draw (-1.5,-1)--(-1.5,-2);
\draw (1.5,-1)--(0.5,-2);
\draw (1.5,-1)--(2.5,-2);
\draw (2.5,-2)--(2.5,-3);
\draw (2.5,-3)--(1.5,-4);
\draw (2.5,-3)--(3.5,-4);

\node at (0.5,-5) {$A_3$};

\node[fill=black,circle,inner sep=1.5pt]  at (7.5,0) {};
\node[fill=black,circle,inner sep=1.5pt]  at (6,-1) {};
\node[fill=black,circle,inner sep=1.5pt]  at (9,-1) {};
\node[fill=black,circle,inner sep=1.5pt]  at (6,-2) {};
\node[fill=black,circle,inner sep=1.5pt]  at (8,-2) {};
\node[fill=black,circle,inner sep=1.5pt]  at (10,-2) {};
\node[fill=black,circle,inner sep=1.5pt]  at (5,-3) {};
\node[fill=black,circle,inner sep=1.5pt]  at (7,-3) {};
\node[fill=black,circle,inner sep=1.5pt]  at (5,-4) {};

\draw (7.5,0)--(6,-1);
\draw (7.5,0)--(9,-1);
\draw (6,-1)--(6,-2);
\draw (6,-2)--(5,-3);
\draw (6,-2)--(7,-3);
\draw (5,-3)--(5,-4);
\draw (9,-1)--(8,-2);
\draw (9,-1)--(10,-2);

\node at (7.5,-5) {$A_4$};

\end{tikzpicture}
\caption{Trees $A_3$ and $A_4$.}\label{fig:2trees2}
\end{figure}

\section{Extension to a class of posets}\label{secpos}

In this section, we consider a specific class of posets and generalise our polynomial from rooted trees to those posets.
\begin{defn}\label{defnvpo}
A poset is called a $\V$-poset if it can be generated by repeated application of the following three operations:
	\begin{enumerate}[label=(\alph*)]
	\item taking a disjoint union of smaller $\V$-posets (Figure~\ref{fig:posetconstruction1}),
	\item adding a new greatest element to a $\V$-poset (Figure~\ref{fig:posetconstruction2}, right),
	\item adding a new least element to a $\V$-poset (Figure~\ref{fig:posetconstruction2}, left),
	\end{enumerate}
starting from an empty poset. In particular, the empty set is considered a $\V$-poset for convenience.	
\end{defn}

Figure~\ref{cha.pos} shows an example of a $\V$-poset; the inspiration for the name comes from the V-shapes formed by adding a new greatest or least element, see Figure~\ref{fig:posetconstruction2}.

\begin{figure}[htbp]
\centering
\begin{tikzpicture}[scale=0.7]

\node[fill=black,circle,inner sep=1.5pt]  at (0,0) {};
\node[fill=black,circle,inner sep=1.5pt]  at (-1,1) {};
\node[fill=black,circle,inner sep=1.5pt]  at (1,1) {};
\node[fill=black,circle,inner sep=1.5pt]  at (-1,2) {};
\node[fill=black,circle,inner sep=1.5pt]  at (1,2) {};

\draw (0,0)--(-1,1)--(-1,2);
\draw (0,0)--(1,1)--(1,2);

\node[fill=black,circle,inner sep=1.5pt]  at (2.5,0) {};
\node[fill=black,circle,inner sep=1.5pt]  at (4.5,0) {};
\node[fill=black,circle,inner sep=1.5pt]  at (3.5,1) {};
\node[fill=black,circle,inner sep=1.5pt]  at (3.5,2) {};

\draw (2.5,0)--(3.5,1)--(4.5,0);
\draw (3.5,1)--(3.5,2);

\node[fill=black,circle,inner sep=1.5pt]  at (7,0) {};
\node[fill=black,circle,inner sep=1.5pt]  at (6,1) {};
\node[fill=black,circle,inner sep=1.5pt]  at (8,1) {};
\node[fill=black,circle,inner sep=1.5pt]  at (7,2) {};

\draw (7,0)--(6,1)--(7,2)--(8,1)--(7,0);

\end{tikzpicture}
\caption{Disjoint union of three posets}\label{fig:posetconstruction1}
\end{figure}

\begin{figure}[htbp]
\centering
\begin{tikzpicture}[scale=0.6]

\node[fill=black,circle,inner sep=1.5pt]  at (0,0) {};
\node[fill=black,circle,inner sep=1.5pt]  at (-1,1) {};
\node[fill=black,circle,inner sep=1.5pt]  at (1,1) {};
\node[fill=black,circle,inner sep=1.5pt]  at (-1,2) {};
\node[fill=black,circle,inner sep=1.5pt]  at (1,2) {};

\draw (0,0)--(-1,1)--(-1,2);
\draw (0,0)--(1,1)--(1,2);

\node[fill=black,circle,inner sep=1.5pt]  at (2.5,0) {};
\node[fill=black,circle,inner sep=1.5pt]  at (4.5,0) {};
\node[fill=black,circle,inner sep=1.5pt]  at (3.5,1) {};
\node[fill=black,circle,inner sep=1.5pt]  at (3.5,2) {};

\draw (2.5,0)--(3.5,1)--(4.5,0);
\draw (3.5,1)--(3.5,2);

\node[fill=black,circle,inner sep=1.5pt]  at (7,0) {};
\node[fill=black,circle,inner sep=1.5pt]  at (6,1) {};
\node[fill=black,circle,inner sep=1.5pt]  at (8,1) {};
\node[fill=black,circle,inner sep=1.5pt]  at (7,2) {};

\draw (7,0)--(6,1)--(7,2)--(8,1)--(7,0);

\node[fill=black,circle,inner sep=1.5pt]  at (3.5,-2) {};

\draw [thick] (3.5,-2)--(0,0);
\draw [thick] (3.5,-2)--(2.5,0);
\draw [thick] (3.5,-2)--(4.5,0);
\draw [thick] (3.5,-2)--(7,0);

\fill [opacity=0.2,gray] (0,0)--(3.5,-2)--(7,0)--cycle;

\node[fill=black,circle,inner sep=1.5pt]  at (12,-2) {};
\node[fill=black,circle,inner sep=1.5pt]  at (11,-1) {};
\node[fill=black,circle,inner sep=1.5pt]  at (13,-1) {};
\node[fill=black,circle,inner sep=1.5pt]  at (11,0) {};
\node[fill=black,circle,inner sep=1.5pt]  at (13,0) {};

\draw (12,-2)--(11,-1)--(11,0);
\draw (12,-2)--(13,-1)--(13,0);

\node[fill=black,circle,inner sep=1.5pt]  at (14.5,-2) {};
\node[fill=black,circle,inner sep=1.5pt]  at (16.5,-2) {};
\node[fill=black,circle,inner sep=1.5pt]  at (15.5,-1) {};
\node[fill=black,circle,inner sep=1.5pt]  at (15.5,0) {};

\draw (14.5,-2)--(15.5,-1)--(16.5,-2);
\draw (15.5,-1)--(15.5,0);

\node[fill=black,circle,inner sep=1.5pt]  at (19,-2) {};
\node[fill=black,circle,inner sep=1.5pt]  at (18,-1) {};
\node[fill=black,circle,inner sep=1.5pt]  at (20,-1) {};
\node[fill=black,circle,inner sep=1.5pt]  at (19,0) {};

\draw (19,-2)--(18,-1)--(19,0)--(20,-1)--(19,-2);

\node[fill=black,circle,inner sep=1.5pt]  at (15,2) {};

\draw [thick] (15,2)--(11,0);
\draw [thick] (15,2)--(13,0);
\draw [thick] (15,2)--(15.5,0);
\draw [thick] (15,2)--(19,0);

\fill [opacity=0.2,gray] (11,0)--(15,2)--(19,0)--cycle;

\end{tikzpicture}
\caption{Adding a new least or greatest element}\label{fig:posetconstruction2}
\end{figure}

\begin{rem}
If either the second or the third item is removed from the list of feasible operations, we obtain the family of all posets whose Hasse diagram is a union of rooted trees (the roots being either all maximal or all minimal elements).
\end{rem}

The following definition introduces the notion of basic elements, which generalise leaves of a tree (see Proposition~\ref{prop:treehasse}).
\begin{defn}\label{basic}
An element $x$ of a $\V$-poset is called \emph{basic} if it satisfies the following axioms:

\begin{enumerate}[label=B.\arabic*]
\item There are no two incomparable elements $u,v$ in the poset such that $x>u$ and $x>v$. \label{bas1}

\item There are no two incomparable elements $u,v$ in the poset such that $x<u$ and $x<v$. \label{bas2}

\item There is no element $u$ in the poset such that $u<x$ and for all $w \neq u,x$
\[u\geq w \Longleftrightarrow x \geq w\] and \[u \leq w \Longleftrightarrow x \leq w.\] \label{bas3}
\end{enumerate}

\begin{rem}
In~\ref{bas3}, one could also replace the condition $u < x$ by $u > x$ to obtain an essentially equivalent definition (the basic elements under the new condition correspond to the basic elements in the poset with all order relations reversed).
\end{rem}

Note that the only element of a single-element poset is basic. 
An element is said to be \emph{associated} with a set of basic elements $B$ if it is comparable to all the elements of $B$ and incomparable to all other basic elements. A non-basic element $u$ is called \emph{upper element} if there exists a basic element $b$ such that $u >b$. Analogously, a non-basic element $\ell$ is called \emph{lower} element if there exists a basic element $b$ such that $\ell < b$.

\begin{exa}
Let us consider the poset in Figure~\ref{cha.pos}. The basic elements in $P$ are $v_5,v_6,v_8,v_{10}$, the upper elements are $v_1,v_2,v_3,v_4,v_9$, and the only lower element is $v_7$. 
\begin{figure}[htbp]
\centering
\begin{tikzpicture}[scale=0.8]
\node[fill=black,circle,inner sep=1pt] (t1) at (0,0) {};
\node[fill=black,circle,inner sep=1pt] (t2) at (-2,-1) {};
\node[fill=black,circle,inner sep=1pt] (t3) at (-2.5,-2) {};
\node[fill=black,circle,inner sep=1pt] (t4) at (-1.5,-2) {};
\node[draw,fill=white,circle,inner sep=1pt] (t5) at (-2.5,-3) {};
\node[draw,fill=white,circle,inner sep=1pt] (t6) at (-1.5,-3) {};
\node[fill=black,circle,inner sep=1pt] (t7) at (-0.5,-4) {};
\node[draw,fill=white,circle,inner sep=1pt] (t8) at (1,-2) {};
\node[fill=black,circle,inner sep=1pt] (t9) at (3,-2) {};
\node[draw,fill=white,circle,inner sep=1pt] (t10) at (3,-3) {};
\draw (t1)--(t2)--(t3)--(t5)--(t7);
\draw (t2)--(t4)--(t6)--(t7);
\draw (t1)--(t8)--(t7);
\draw (t1)--(t9)--(t10);
\node at (0.3,0.1) {$v_1$};
\node at (-2.3,-0.8) {$v_2$};
\node at (-2.8,-2) {$v_3$};
\node at (-1.2,-2) {$v_4$};
\node at (-2.8,-3) {$v_5$};
\node at (-1.2,-3) {$v_6$};
\node at (-0.5,-4.3) {$v_7$};
\node at (1.3,-2) {$v_8$};
\node at (3.3,-2) {$v_9$};
\node at (3.3,-3.3) {$v_{10}$};
\end{tikzpicture}
\caption{A $\V$-poset $P$.}
\label{cha.pos}
\end{figure}
\end{exa}

\end{defn}

\begin{lem}\label{status}
A greatest element and a least element cannot be basic, except for linearly ordered sets, where the least element is basic.

The status of an element does not change under the three operations stated in Definition~\ref{defnvpo}, except for linearly ordered sets. Namely, a basic/non-basic/upper/lower element remains a basic/non-basic/upper/lower element.
\end{lem}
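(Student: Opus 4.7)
The plan is to prove both parts by direct manipulation of the three axioms B.1--B.3 defining a basic element, using Part~1 as a tool for Part~2.

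For Part~1, assume $|P| \ge 2$. If $g$ is the greatest element, axiom B.2 is vacuous, while axiom B.1 forces every pair in $P \setminus \{g\}$ to be comparable, so $P$ must be a chain; choosing $u$ to be the element immediately below $g$ then makes axiom B.3 fail, because for every other $w$ one has $w < u < g$, so both equivalences in B.3 hold (if $|P|=2$ they hold vacuously). Hence $g$ is not basic. If $\ell$ is the least element, axioms B.1 and B.3 are vacuous, and B.2 demands that the elements above $\ell$ form a chain, which is equivalent to $P$ being linearly ordered; this recovers the stated characterisation.

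For Part~2, I would handle the three operations separately. For a disjoint union $P = P_1 \sqcup P_2$ with $x \in P_1$, every element of $P_2$ is incomparable to $x$, so it contributes nothing to the strict inequalities in B.1 and B.2, and when it appears as a $w$ in B.3 both equivalences are satisfied trivially (both sides are false). Hence $x$ is basic in $P$ iff it is basic in $P_1$; moreover any witness $b$ for an upper or lower label of $x$ must satisfy $x>b$ or $x<b$ and therefore lies in $P_1$, so the label is inherited. When a new greatest element $g$ is adjoined to $P$, the element $g$ is comparable with every $x \in P$, so it cannot form an incomparable pair witnessing a failure of B.1 or B.2; in B.3, the new choice $w = g$ gives only trivially valid equivalences. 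Hence the status of every $x \in P$ is preserved. For a new least element $\ell$, the argument works identically for B.1, B.2 and for the instances of B.3 with $u \in P$; the only potentially new case is $u = \ell$, and unwinding the two equivalences in that case shows that the violation of B.3 occurs precisely when $x$ is the least element of $P$. By Part~1, this turns a basic element non-basic only when $P$ itself is a chain, which is exactly the stated exception; in that case the old least is still comparable to $\ell$ from above, so it becomes an upper element with $\ell$ as a witnessing basic element.

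The main obstacle is the case analysis of B.3 when a least element is adjoined: one has to cleanly separate the $u \in P$ case (which reduces verbatim to B.3 in $P$) from the $u = \ell$ case, reduce the two resulting equivalences to the single condition that $x$ is the least of $P$, and then invoke Part~1 to identify that condition with $P$ being linearly ordered. All remaining verifications amount to the observation that the new elements are either fully comparable with or fully incomparable to the element $x$ under consideration, and hence contribute only trivial information to the three axioms.
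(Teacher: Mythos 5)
Your proof is correct and follows essentially the same route as the paper: a case analysis of the three constructing operations, checking axioms B.1--B.3 directly, with the only danger being the instance $u=\ell$ of B.3 when a least element is adjoined, which you correctly reduce to ``$x$ is the least element of $P$'' and hence, via Part~1, to the linearly ordered exception. The only (cosmetic) difference is that you establish Part~1 as a standalone characterisation and then invoke it in Part~2, whereas the paper folds that argument into the case analysis; as a small bonus your version cites the correct axiom (B.1, not B.2) for why a greatest element of a non-chain fails to be basic.
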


\begin{proof}
Let us consider the three operations stated in Definition~\ref{defnvpo}.

\textbf{Case 1:} Taking a disjoint union of $\V$-posets. The first statement does not apply here. Non-basic elements remain non-basic elements: if one of the three properties is violated in a component, then it is still violated after taking the union. Conversely, if one of the properties is violated for an element $x$, then it must already be violated in its component, so basic elements stay basic elements as well. The status of upper and lower elements remains for the same reason.

\textbf{Case 2:} Adding a greatest element $g$ to a $\V$-poset $P$. Since $g$ either does not satisfy~\ref{bas1} (if the poset contains incomparable elements) or does not satisfy~\ref{bas3} (otherwise), $g$ is not a basic element and the first statement holds.
Next, let $b$ be a basic element of $P$. Since $g$ is comparable to all the elements of $P$, $b$ still satisfies~\ref{bas1} and~\ref{bas2}. In addition $g$ is greater than $b$, so $b$ satisfies~\ref{bas3} as well. Thus, $b$ remains basic in the new poset. On the other hand, it is straightforward that every non-basic element $a$ remains non-basic (if one of the conditions is violated before $g$ is added, then it is still violated afterwards), thus lower and upper elements do not change their status either.

\textbf{Case 3:} Adding a least element $\ell$ to a $\V$-poset $P$. Let us first consider the case that $P$ is not linearly ordered. Then $P$ contains incomparable elements $u$ and $v$, so $\ell$ does not satisfy~\ref{bas2}. Thus, $\ell$ is not a basic element. Next, let $b$ be a basic element of $P$. With a similar reasoning as before, we see that $b$ still satisfies~\ref{bas1} and~\ref{bas2}.
The only way~\ref{bas3} could be violated through the addition of $\ell$ is that $\ell$ takes the role of $u$ in~\ref{bas3}. But since $\ell \leq w$ for all $w \in P$, this would also imply $b \leq w$ for all $w \in P$, i.e., $b$ would be the least element of $P$. Since $P$ was assumed not to be linearly ordered, there must now be two incomparable elements $u,v$ in $P$ such that $u > b$ and $v > b$, contradicting the assumption that $b$ is basic in $P$. So $b$ still satisfies~\ref{bas3} and remains basic. Again, it is clear that a non-basic element remains non-basic, and that lower and upper elements do not change their status either.

Now, for a linearly ordered set $P$, it is easy to see that the only basic element of $P$ is its least element in view of condition~\ref{bas3}. However when we add a new least element $\ell$, $\ell$ becomes the basic element, and the least element of $P$ changes its status to non-basic as it does not satisfy~\ref{bas3} anymore.
\end{proof}

\begin{rem}
When $\V$-posets are constructed by means of Definition~\ref{defnvpo}, we can assume without loss of generality that linearly ordered posets are always built by adding greatest elements. Then the exceptional case of Lemma~\ref{status} never applies.
\end{rem}

\begin{prop}\label{prop:treehasse}
For posets whose Hasse diagrams are rooted trees, the basic elements are characterised as follows.
\begin{enumerate}[label=(\alph*)]

\item If $P$ is a tree poset where the root is the greatest element, the basic elements are the leaves.

\item If $P$ is a tree poset where the root is the least element, then the basic elements are the minimal elements of the induced subposet 
\[P_1=\{x \in P \,:\, \text{there is exactly one leaf $\ell$ such that $x \leq \ell$}\}.\]
\end{enumerate}
\end{prop}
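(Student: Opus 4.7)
The plan is to verify the three axioms in Definition~\ref{basic} directly in each case, using the rooted-tree structure of the Hasse diagram. Throughout I will use ``ancestor'' and ``descendant'' in the tree-theoretic sense (on the path to the root, or in the subtree hanging off a vertex, respectively). A basic fact I will use repeatedly when checking axiom~\ref{bas3} is that in a rooted tree, any $w$ that is incomparable to $x$ in the tree-order is also incomparable to every descendant of $x$, since $w$ and $x$ lie in different branches at their lowest common ancestor.

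For Case~(1), where the root is the greatest element, the elements $\geq x$ form the chain from $x$ to the root, so~\ref{bas2} is automatic. The elements $\leq x$ are $x$ together with the vertices in the subtree rooted at $x$, and these contain two incomparable elements as soon as $x$ has two or more children, in which case~\ref{bas1} fails. If $x$ is a leaf, both~\ref{bas1} and~\ref{bas3} are vacuous, so $x$ is basic. The remaining subcase, where $x$ has exactly one child $c$ and its whole descendant subtree is a chain, is handled by taking $u=c$ in~\ref{bas3}: the ancestors of $c$ other than $x$ coincide with the ancestors of $x$ other than $x$, the descendants of $x$ other than $c$ are all descendants of $c$, and any $w$ incomparable to $x$ is also incomparable to $c$. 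Thus the basic elements in Case~(1) are precisely the leaves.

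Case~(2) is dual: with the root as least element, the elements $\leq x$ form a chain and~\ref{bas1} is automatic, while the elements $\geq x$ are $x$ and its tree-descendants, which contain incomparable pairs exactly when at least two leaves of $P$ lie above $x$. Hence~\ref{bas2} holds iff $x\in P_1$. Assuming $x\in P_1$, the main step is~\ref{bas3}. I plan to show that the only possible violator $u<x$ is the tree-parent $p$ of $x$: for any strictly higher ancestor $u'$, some intermediate vertex $w$ on the tree-path between $u'$ and $x$ satisfies $u'\leq w$ but $x\not\leq w$, so $u'$ already differs from $x$. For $u=p$, a direct comparison shows that $p$ matches $x$ on all $w\neq p,x$ precisely when the descendants of $p$ are $\{p\}\cup\{\text{descendants of }x\}$, i.e.\ when $p$'s only child is $x$; otherwise a second child $y$ of $p$ witnesses $p\leq y$ and $x\not\leq y$. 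Therefore $x$ is basic iff $x\in P_1$ and either $x$ is the root or $p$ has at least two children.

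Finally I will identify this condition with minimality in $P_1$. The elements strictly below $x$ in $P$ are precisely the tree-ancestors of $x$, and such an ancestor $y$ lies in $P_1$ iff every vertex on the tree-path between $y$ and $x$ other than $x$ has only one child; in particular $p\in P_1$ iff $p$'s only child is $x$. Consequently $\{y\in P_1:y<x\}$ is nonempty iff $p\in P_1$, and the condition ``$x$ is the root or $p$ has at least two children'' is exactly $x$ being minimal in $P_1$. I expect the main obstacle to be the careful bookkeeping in~\ref{bas3}, in particular systematically ruling out higher ancestors as violators and verifying both the upper and lower relations simultaneously.
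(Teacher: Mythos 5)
Your proof is correct and takes essentially the same route as the paper's (much terser) argument: conditions~\ref{bas1}/\ref{bas2} force the descendant subtree of $x$ to be a chain (equivalently, $x$ has at most one leaf descendant, i.e.\ $x\in P_1$ in the second case), and~\ref{bas3} then forces minimality among such vertices. One cosmetic remark on Case~(1): the subcase where $x$ has exactly one child but a non-chain descendant subtree is not listed explicitly in your case split, though it is covered both by the~\ref{bas1} observation (two incomparable proper descendants of $x$ still exist) and by your $u=c$ argument, which does not actually use the chain hypothesis.
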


\begin{proof}
In the first case, if the root is the greatest element, then by part~\ref{bas1} in Definition~\ref{basic}, the basic elements are vertices with at most one leaf descendant. Furthermore, by~\ref{bas3}, they have to be minimal, which corresponds exactly to the leaves of the tree.

In the second case, by~\ref{bas2}, the basic elements are vertices with at most one leaf descendant, which are precisely the elements of $P_1$. By~\ref{bas3}, they have to be the minimal elements of $P_1$.
\end{proof}

We denote the family of all $\V$-posets by $\V$. Let us first construct the generalisation of our polynomial in a recursive way.

\begin{defn}\label{chat.defn2}
We define the polynomial $\Pp(P;x,y,z)$ for a poset $P \in \V$ as follows:
\begin{align*}
\Pp(\emptyset;x,y,z)&=1,\\
\Pp(\bullet;x,y,z)&=x,\\
\Pp(\cup_i P_i;x,y,z)&=\prod_{i} \Pp(P_i;x,y,z), \quad \text{where $\cup_iP_i$ is a disjoint union},\\
\Pp(P\cup \{g\};x,y,z)&=\Pp(P\cup \{\ell\};x,y,z)=\Pp(P;x,y,z)+yz^{|P|},
\end{align*}
where $g$ and $\ell$ are respectively a greatest and a least element.
\end{defn}

\begin{exa}
Let us compute the polynomial associated with the poset $P$ in Figure~\ref{cha.pos}.
\begin{align*}
\Pp(P;x,y,z)&=\big(\Pp(\{v_2,v_3,v_4,v_5,v_6\};x,y,z) \Pp(\{v_8\};x,y,z)+yz^{6} \big)\Pp(\{v_9,v_{10}\};x,y,z) + yz^{9}\\
&=\big(x((x+yz)^2+yz^4)+yz^6\big)(x+yz)+yz^9\\
&= y z^9 + y^2 z^7 + x y z^6 + x y^2 z^5 + x^2 y z^4  + x y^3 z^3 + 3 x^2 y^2 z^2 + 3 x^3 y z  + x^4.
\end{align*}
\end{exa}

As for rooted trees, we can also represent the polynomial in terms of maximal antichains.

\begin{defn}
An \emph{antichain} is a subset of a poset in which any two distinct elements are incomparable. A \emph{maximal antichain} is an antichain that is not a proper subset of any other antichain. A \emph{chain} is a linearly ordered subset of a poset $P$. A \emph{cutset} is a set of elements in a poset that intersects every maximal chain.
\end{defn}

\begin{prop}\label{proanti}
In a $\V$-poset $P$, the following holds:
\begin{itemize}
\item Every non-basic element is comparable to at least one basic element.
\item The basic elements form a maximal antichain.
\item If an element $x$ is associated with the set of all basic elements in $P$, then the only maximal antichain containing $x$ is $\{x\}$.
\end{itemize}

\end{prop}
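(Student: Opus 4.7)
The plan is to prove all three statements simultaneously by induction on the recursive construction of $\V$-posets given in Definition~\ref{defnvpo}. The base case of a single-element poset is immediate: its unique element is basic, so statement 1 is vacuous, statement 2 says that the singleton antichain is maximal, and statement 3 follows because the poset has only one element. Throughout the induction I will use Lemma~\ref{status} to track which elements remain basic after each operation, invoking the remark immediately after that lemma so that linearly ordered posets are always built by adding greatest elements and the exceptional case never arises.

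For statement 1, in a disjoint union $\cup_i P_i$, a non-basic element $u$ lies in some $P_i$, where by the inductive hypothesis it is comparable to a basic element of $P_i$, and that element remains basic in the union. When we add a greatest element $g$ to $P$, the new element $g$ is comparable to every element of $P$, so it is comparable to any basic element of $P$ (and such an element exists whenever $P$ is non-empty, since by statement 2 applied inductively to $P$ the basic elements form a non-empty maximal antichain). All other non-basic elements inherit their basic partners from $P$ via Lemma~\ref{status}. The least-element case is symmetric.

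For statement 2, I first verify that the basic elements form an antichain. In a disjoint union, basic elements from distinct components are incomparable by the definition of disjoint union, while within a single component the property holds by induction. When adding a greatest or least element, Lemma~\ref{status} says the set of basic elements does not change, so incomparability is inherited. Maximality is then exactly statement 1: any non-basic element is comparable to a basic element and so cannot extend the antichain.

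For statement 3, I will show that if $x$ is comparable to every basic element of $P$ then $x$ is in fact comparable to every element of $P$, which at once forces $\{x\}$ to be the unique maximal antichain containing $x$. In a disjoint union with at least two non-empty components, each component contains at least one basic element by statement 2, and no element can be comparable to basic elements sitting in different components, so the hypothesis is vacuous. If $P = P' \cup \{g\}$ with $g$ a greatest element, either $x = g$, which is comparable to everything, or $x \in P'$; in the latter case the basic elements of $P$ coincide with those of $P'$ by Lemma~\ref{status}, so $x$ is associated to all basic elements of $P'$ and the inductive hypothesis yields that $x$ is comparable to every element of $P'$, while $x$ is comparable to $g$ automatically. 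The least-element case is symmetric. The main bookkeeping obstacle is keeping track of when basic status is preserved under the two unary operations, but this is entirely absorbed into Lemma~\ref{status}, leaving the rest of the argument routine.
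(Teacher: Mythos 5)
Your proof is correct and follows essentially the same route as the paper: induction over the three construction operations of Definition~\ref{defnvpo}, with Lemma~\ref{status} tracking basic status. The only (harmless) cosmetic differences are that you derive maximality of the basic antichain directly from statement 1 rather than case by case, and you restate statement 3 as ``$x$ is comparable to every element,'' which is an equivalent formulation since any element incomparable to $x$ would extend $\{x\}$ to a larger maximal antichain.
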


\begin{proof}
We reason by induction on the size $n$ of the poset. If $n=1$, the statements hold trivially. Assume they hold whenever $n \leq k$, for some $k \geq 1$. In view of Definition~\ref{defnvpo}, three operations can be performed to obtain such a poset:

\textbf{Case 1:} Taking the union of disjoint $\V$-posets of size less than or equal to $k$. 
\begin{itemize}
\item By the induction hypothesis, every non-basic element from each component is comparable to at least one basic element. Moreover, by Lemma~\ref{status} the status of all elements does not change under this operation. Hence, the first statement holds.  
\item By the induction hypothesis, the basic elements of each component form a maximal antichain in their respective $\V$-posets. However, a maximal antichain of a disjoint union of posets is formed by a combination of maximal antichains in each poset. Thus, the second statement holds.
\item The last statement cannot occur here, since there is no element that is associated with the set of all basic elements (the posets are disjoint, and each of them contains at least one basic element).
\end{itemize}  

\textbf{Case 2:} Adding a greatest element $g$ to a $\V$-poset $P$ of size $k$.
\begin{itemize}
\item By Lemma~\ref{status}, the non-basic elements of the new poset are the non-basic elements of $P$, plus $g$, and the basic elements of $P$ remain basic in the new poset. Moreover, by the induction hypothesis, every non-basic element of $P$ is comparable to at least one basic element in $P$, and $g$ is comparable to all the elements of $P$, in particular the basic elements of $P$. Thus, the first statement holds.
\item By Lemma~\ref{status} again, the basic elements of $P$ are the only basic elements in $P \cup \{g\}$. By the induction hypothesis we know that these basic elements form a maximal antichain in $P$. In addition, a maximal antichain in $P$ is still a maximal antichain in $P\cup \{g\}$, since $g$ is comparable to all elements. Thus the second statement holds.
\item Let $x$ be an element associated with the set of all basic elements in $P \cup \{g\}$. Either $x$ is an element of $P$ associated with the basic elements in $P$ or $x=g$. The statement is true in the first case by the induction hypothesis and Lemma~\ref{status}. If $x=g$, $g$ is comparable to all the elements of $P$, so the only antichain containing $g$ is $\{g\}$. 
\end{itemize}

\textbf{Case 3:} Adding a least element $\ell$. Here we can use a similar argument as in Case 2. 
\end{proof}

Let $A$ be a maximal antichain in $P$. We write $b(A)$ for the number of basic elements in $A$. For $a \in A$, we define a set $P_a$ related to $a$ as follows. If $a$ is a basic element, $P_a$ is the empty set. Next, let $a$ be a non-basic element, and let $B$ be the set of basic elements to which it is comparable (so $a$ is associated with the set $B$). 

\begin{itemize}
\item If $a$ is a lower element,
\[P_a=\{b \in P:a<b, \nexists\, c \: \text{with}\: c<b \,\text{and $a$ incomparable to $c$}\}.\]
\item If $a$ is an upper element,  
\begin{align*}
P_a &=\{b \in P:a>b,\nexists\, c \; \text{with}\; c>b \,\text{and $a$ incomparable to $c$}\} \\
&\qquad \Ba \{\ell \in P: \ell \; \text{is a lower element associated with $B$}\}.
\end{align*}
\end{itemize}

Now we define
\[s(A)=\sum_{a \in A} |P_a|.\]

\begin{exa}
Let us consider again the $\V$-poset in Figure~\ref{cha.pos}. We have
\begin{align*}
P_{v_5}&=P_{v_6}=P_{v_8}=P_{v_{10}}=\emptyset,\\
P_{v_1}&=\{v_2,v_3,v_4,v_5,v_6,v_7,v_8,v_9,v_{10}\},\\
P_{v_2}&=\{v_3,v_4,v_5,v_6\},\\
P_{v_3}&=\{v_5\},\quad P_{v_4}=\{v_6\}, \quad P_{v_9}=\{v_{10}\},\\
P_{v_7}&=\{v_2,v_3,v_4,v_5,v_6,v_8\}.
\end{align*}
\end{exa}

\begin{lem}\label{lembs}
Let $P$ be a $\V$-poset and $A$ a maximal antichain of $P$. Let us add a greatest element $g$ to $P$. For $a \in A$, we denote by $P_a$ the set related to $a$ in $P$, defined as above, and $P'_a$ the set related to $a$ in $P\cup \{g\}$. The following statements hold:
\begin{enumerate}[label=(\alph*)]
\item the number of basic elements $b(A)$ in $A$ remains the same,
\item if $a$ is an upper element, then $P'_a=P_a$,
\item if $a$ is a lower element not associated with the set of all basic elements in $P$, $P'_a=P_a$,
\item if $a$ is a lower element associated with the set of all basic elements in $P$, $P'_a=P_a \cup \{g\}$. 
\end{enumerate}
\end{lem}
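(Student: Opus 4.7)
The plan is to compare the sets $P_a$ and $P'_a$ element by element: I ask (i) whether $g$ itself lies in $P'_a$, and (ii) whether the ``no forbidden witness $c$'' clause for an element $b \in P$ changes when $c = g$ is now allowed as a candidate. By Lemma~\ref{status}, the basic/upper/lower status of every old element of $P$ is preserved, and $g$ itself is upper (it is neither basic nor lower); in particular the set $B$ of basic elements to which $a$ is associated is the same in $P$ and in $P \cup \{g\}$, and likewise for the set of lower elements associated to $B$. This already yields Statement~(1), since $A \subseteq P$ and $b(A)$ only records how many elements of $A$ are basic.

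For Statement~(2), $a$ is upper, and $g > a$ means $a \not> g$, so $g$ fails the very first condition for membership in $P'_a$. For $b \in P$ with $a > b$, the only new candidate witness $c$ in $P \cup \{g\}$ is $c = g$, and this candidate fails because $a$ is comparable to $g$. The subtracted set of lower elements associated to $B$ is unchanged by the opening paragraph, so $P'_a = P_a$. For Statement~(3), $a$ is lower and there exists a basic $c_0 \in P$ incomparable to $a$. For $b \in P$ with $a < b$, the same bookkeeping as in~(2) shows that $c = g$ cannot appear as a new forbidden witness, because $g \not< b$. On the other hand $g \notin P'_a$: although $a < g$, the pair $c_0 < g$ together with the incomparability of $a$ and $c_0$ exhibits a forbidden witness. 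Hence $P'_a = P_a$.

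Statement~(4) is where the real work lies. The bookkeeping for elements $b \in P$ is identical to~(3), so the statement reduces to proving $g \in P'_a$, which is equivalent to showing that $a$ is comparable to every element of $P$. I would establish this auxiliary comparability claim by induction on $|P|$ following the three operations of Definition~\ref{defnvpo}. A disjoint-union construction is ruled out here, because the component not containing $a$ would then have to be free of basic elements, but every non-empty $\V$-poset contains at least one. If $P = Q \cup \{g'\}$ with a newly added greatest element $g'$, then $a \in Q$ retains its lower status and its association to the (unchanged) set of basic elements of $Q$ by Lemma~\ref{status}; the inductive hypothesis on $Q$ yields comparability of $a$ with every element of $Q$, and $a < g'$ completes the step. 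The case of adding a least element $\ell_0$ is similar: if $a = \ell_0$ the claim is trivial, and otherwise one reduces to $Q$, invoking the remark following Lemma~\ref{status} to sidestep the degenerate linearly-ordered sub-case. The main obstacle is precisely this comparability claim; the rest of the lemma is a mechanical unpacking of the definitions.
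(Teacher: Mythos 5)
Your proof is correct, and for the first three statements it follows essentially the same route as the paper: compare $P_a$ and $P'_a$ element by element, using Lemma~\ref{status} to see that statuses, the set $B$, and hence the subtracted set of lower elements are all unchanged, and checking that $g$ can never serve as a new forbidden witness for an old $b\in P$. The one genuine divergence is in statement~(4), where you correctly identify that $g\in P'_a$ is equivalent to $a$ being comparable to every element of $P$, and then prove this comparability claim by a fresh structural induction on the three operations of Definition~\ref{defnvpo}. The paper simply asserts that ``$g$ satisfies the definition of $P'_a$''; the fact you are supplying is exactly the content of the third bullet of Proposition~\ref{proanti} (an element associated to all basic elements lies in no maximal antichain other than its own singleton, hence is comparable to everything), so you could have cited that proposition instead of redoing the induction. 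Your induction is nevertheless sound --- the disjoint-union case is excluded because each nonempty component contains a basic element (the basic elements form a nonempty maximal antichain by Proposition~\ref{proanti}), and the linearly ordered degenerate case is harmless since there the conclusion is trivial --- so the net effect is a self-contained but slightly redundant argument that is, if anything, more explicit than the paper's.
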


\begin{proof}
By Lemma~\ref{status}, the greatest element $g$ is not a basic element and the other elements do not change their status. Thus, $b(A)$ does not change.

Next, if $a$ is an upper element, then $g \not \in P'_a$ since $g >a$. All other elements of $P_a$ still satisfy the definition and remain in $P_a'$. On the other hand, no new elements satisfy the definition of $P_a$. Hence, we have $P'_a=P_a$.

If $a$ is a lower element not associated with the set of all basic elements in $P$, then there exists a basic element $c$ such that $g > c$ and $c$ is incomparable to $a$. So $g \not \in P'_a$, and again $P'_a=P_a$.

If $a$ is a lower element associated with the set of all basic elements in $P$, then $g$ satisfies the definition of $P_a'$ while all elements of $P_a$ remain. Thus $P'_a=P_a \cup \{g\}$.
\end{proof}

The same reasoning can be adapted to adding a least element, where we obtain the following lemma, which is in fact slightly simpler.

\begin{lem}\label{lemleast}
Let $P$ be a $\V$-poset that is not linearly ordered and $A$ a maximal antichain of $P$. Let us add a least element $\ell$ to $P$. For $a \in A$, we denote by $P_a$ the set related to $a$ in $P$, defined as above, and $P'_a$ the set related to $a$ in $P\cup \{\ell\}$. The following statements hold:
\begin{enumerate}[label=(\alph*)]
\item the number of basic elements $b(A)$ in $A$ remains the same,
\item for every element $a \in A$, $P'_a=P_a$.
\end{enumerate}
\end{lem}

\begin{thm}
Let $\Aa(P)$ be the set of all maximal antichains of a nonempty poset $P \in \V$. We have 
\[\Pp(P;x,y,z)= \sum_{A \in \Aa(P)}x^{b(A)}\,y^{|A|-b(A)}z^{s(A)}.\]
\end{thm}

\begin{proof}
Let $R(P;x,y,z)$ denote the right side of the equation. We use induction on the size of $P$. Since the only element of the single-element poset $\bullet$ is basic, $R(\bullet;x,y,z)=x=\Pp(\bullet;x,y,z)$ trivially holds.
Now for $P \neq \bullet$, we have to check that $R(P;x,y,z)$ satisfies Definition~\ref{chat.defn2}.

Suppose first that $P$ is a disjoint union of $P_1, \dots, P_k$. The maximal antichains of $P$ are precisely the unions of maximal antichains in $P_1,\dots, P_k$:
\[\Aa(P)=\{A_1\cup A_2\cup \dots \cup A_k \,:\, A_i \in \Aa(P_i) \, \text{for}\, 1\leq i \leq k\}.\]
Note that $b(A)=\sum_{i=1}^k b(A_i)$ and $s(A)=\sum_{i=1}^k s(A_i)$ if $A$ is the union of antichains $A_1,A_2,\ldots,A_k$ in $P_1,P_2,\ldots,P_k$ respectively.
Thus,
\begin{align*}
R(P;x,y,z) &=\sum_{A \in \Aa(P)}x^{b(A)}\,y^{|A|-b(A)}\,z^{s(A)} \\
&=\prod_{i=1}^k\Big(\sum_{A_i \in \Aa(P_i)} x^{b(A_i)}y^{|A_i|-b(A_i)}\,z^{s(A_i)}\Big) \\
&=\prod_{i=1}^k R(P_i;x,y,z).
\end{align*} 

Now we consider the scenario that a new greatest element is added to a poset $P$: $P' = P \cup \{g\}$. The maximal antichains of $P'$ are precisely the maximal antichains of $P$, plus the antichain $\{g\}$.

Let $A \in \Aa(P)$. Suppose first that $A$ does not contain a lower element associated with all the basic elements in $P$. By Lemma~\ref{lembs}, adding a greatest element $g$ to $P$ does not affect $b(A)$ or $s(A)$. 

Otherwise, $A$ contains a lower element $x$ associated with all the basic elements in $P$. By Proposition~\ref{proanti}, $A=\{x\}$. The last item of Proposition~\ref{proanti} also implies that any two lower elements that are associated with all the basic elements must be comparable. So let $\ell_1,\ell_2\dots,\ell_s$ be the lower elements associated with all the basic elements in $P$ such that $\ell_1>\ell_2>\cdots>\ell_s$. By Lemma~\ref{lembs}, adding a greatest element $g$ will change the $P_{\ell_i}$s as follows: $P_{\ell_i}'=P_{\ell_i} \cup \{g\}$, so $|P'_{\ell_i}|=|P_{\ell_{i+1}}|$ for $i=1,\dots,s-1$. And according to the definition, $|P'_g|=|P|-|\{\ell_1,\dots \ell_s\}|=|P_{\ell_1}|$ and $|P_{\ell_s}'|=|P|$. Furthermore, the $\ell_i$s and $g$ are non-basic elements, so $b(\{\ell_i\})=b(\{g\})=0$. 

Thus, we get  

\begin{align*}
R(P\cup \{g\};x,y,z)&=\sum_{A \in \Aa(P\cup \{g\})}x^{b(A)}\,y^{|A|-b(A)}\,z^{s(A)}\\
&=\sum_{A \in \Aa(P)\Ba \{\{\ell_1\},\dots,\{\ell_s\}\}} x^{b(A)}\,y^{|A|-b(A)}\,z^{s(A)}+\sum_{i=1}^{s-1} yz^{|P'_{\ell_i}|}+yz^{|P'_g|}+yz^{|P'_{\ell_s}|}\\
&=\sum_{A \in \Aa(P)\Ba \{\{\ell_1\},\dots,\{\ell_s\}\}} x^{b(A)}\,y^{|A|-b(A)}\,z^{s(A)}+\sum_{i=1}^{s-1} yz^{|P_{\ell_{i+1}}|}+yz^{|P_{\ell_1}|}+yz^{|P|}\\
&=\sum_{A \in \Aa(P)} x^{b(A)}\,y^{|A|-b(A)}\,z^{s(A)} + yz^{|P|},
\end{align*} 
which completes the induction in this case. Following Lemma~\ref{lemleast}, a similar reasoning can be applied to adding a least element.
\end{proof}

\begin{exa}
Let us again consider the $\V$-poset in Figure~\ref{cha.pos}. The maximal antichains of $P$ are 
\begin{align*}
&\{v_1\},\{v_7,v_9\},\{v_7,v_{10}\},\{v_2,v_8,v_9\},\{v_2,v_8,v_{10}\},\\ &\{v_3,v_4,v_8,v_9\},\{v_3,v_4,v_8,v_{10}\},\{v_3,v_6,v_8,v_9\},\{v_3,v_6,v_8,v_{10}\},\\&\{v_4,v_5,v_8,v_9\},\{v_4,v_5,v_8,v_{10}\},\{v_5,v_6,v_8,v_9\},\{v_5,v_6,v_8,v_{10}\}.
\end{align*}

 Since the basic elements are $v_5,v_6,v_8,v_{10}$, these maximal antichains correspond respectively to the following monomials:
\begin{align*}
y z^9,y^2 z^7,x y z^6,x y^2 z^5,x^2 y z^4,x y^3 z^3,x^2 y^2 z^2,x^2 y^2 z^2,x^3 y z,x^2 y^2 z^2,x^3 y z,x^3 y z,x^4.
\end{align*} 
Thus,
\[\Pp(P;x,y,z)= y z^9 + y^2 z^7 + x y z^6 + x y^2 z^5 + x^2 y z^4  + x y^3 z^3 + 3 x^2 y^2 z^2 + 3 x^3 y z  + x^4,\]
as determined earlier.
\end{exa}

The special values of Corollary~\ref{cor:yz0}, Corollary~\ref{cor:x0} and Theorem~\ref{thmeval} still have essentially the same interpretations as in the special case of rooted trees, with the same proofs.

\begin{cor}
For every $\V$-poset $P$, we have
\[\Pp(P;x,y,0) = \Pp(P;x,0,z) = x^{\text{number of basic elements of $P$}}.\]
\end{cor}

\begin{cor}
Let $P$ be a $\V$-poset, and $\Aa^*(P)$ the set of maximal antichains of $P$ that do not contain a basic element. We have
\[\Pp(P;0,y,z) = \sum_{A \in \Aa^*(P)}y^{|A|}\,z^{s(A)}.\]
In particular, $\Pp(P;0,1,1)$ is the number of maximal antichains of $P$ that do not contain a basic element.
\end{cor}

\begin{thm}\label{thmeval_poset}
Let $P$ be a $\V$-poset. We have the following special values:
\begin{align*}
\Pp(P;1,1,1) &= \text{number of maximal antichains of $P$},\\
\Pp(P;2,1,1) &=\text{number of antichains of $P$ (including $\emptyset$)},\\
\Pp(P;1,1,2) &=\text{number of cutsets of $P$},\\
\Pp(P;2,1,2) &=2^{|P|}.
\end{align*}
\end{thm}

\begin{rem}
The number of antichains is also equal to the number of upward closed sets and the number of downward closed sets, cf. Remark~\ref{remideal}.
\end{rem}

\begin{rem}
The final identity generalises to
\[\Pp(P;1+y,y,1+y) =(1+y)^{|P|}\]
once again.
\end{rem}

\begin{rem}
In a $\V$-poset, maximal antichains are minimal cutsets (minimal with respect to inclusion), and vice versa. Thus $\Pp(P;1,1,1)$ also counts minimal cutsets. This is easy to verify by induction, based on the recursive definition of posets: maximal antichains, as well as minimal cutsets, in a union of disjoint posets are simply unions of maximal antichains (minimal cutsets, respectively) in the different components. Likewise, when a greatest element $g$ or a least element $\ell$ is added, all maximal antichains and minimal cutsets retain their status, while the only additional maximal antichain, which is also the only additional minimal cutset, is $\{g\}$ ($\{\ell\}$, respectively). However, in general posets, the two notions are not always equivalent: for instance, in the N-poset shown in Figure~\ref{fig.Nbow} (left), $\{v,w\}$ is a maximal antichain, but not a cutset.
\end{rem}

Let us finally remark that Proposition~\ref{prop:probability1} remains valid for posets:

\begin{prop}
Let $P$ be a $\V$-poset, and $p \in (0,1)$ be a fixed probability. If each element of $P$ is selected independently with probability $p$, the probability that the selected elements form a cutset is
\[(1-p)^{|P|} \Pp \Big(P; \frac{p}{1-p}, \frac{p}{1-p}, \frac{1}{1-p} \Big).\]
\end{prop}

Interestingly, the family of $\V$-posets also played a role in the recent paper~\cite{hasebe} by Hasebe and Tsujie in a somewhat different context. It can also be defined without using recursion, as shown in~\cite[Theorem 4.3]{hasebe}. The equivalent definition given by Hasebe and Tsujie is stated in the following proposition. Since it is quite important for the discussion below, let us also provide a proof that the two definitions are equivalent for the sake of completeness. Another characterisation of $\V$-posets was recently given in \cite{cooper}.

\begin{prop}\label{defnvpo2}
A poset $P$ is a $\V$-poset if and only if it does not contain either of the posets in Figure~\ref{fig.Nbow} as an induced subposet.
Equivalently: there are no four elements $u,v,w,x$ such that $u>w,u>x,v>x$; $u,v$ are incomparable; and $w,x$ are incomparable.
\end{prop}

\begin{figure}[htbp]
\centering
\begin{tikzpicture}[scale=0.8]
\node[fill=black,circle,inner sep=1pt] (t1) at (0,0) {};
\node[fill=black,circle,inner sep=1pt] (t2) at (2,0) {};
\node[fill=black,circle,inner sep=1pt] (t3) at (0,-1) {};
\node[fill=black,circle,inner sep=1pt] (t4) at (2,-1) {};
\node[fill=black,circle,inner sep=1pt] (t5) at (5,0) {};
\node[fill=black,circle,inner sep=1pt] (t6) at (7,0) {};
\node[fill=black,circle,inner sep=1pt] (t7) at (5,-1) {};
\node[fill=black,circle,inner sep=1pt] (t8) at (7,-1) {};
\draw (t3)--(t1)--(t4)--(t2);
\draw (t5)--(t8)--(t6)--(t7)--(t5);
\node at (0.2,0.2) {$u$};
\node at (2.2,0.2) {$v$};
\node at (0.2,-1.2) {$w$};
\node at (2.2,-1.2) {$x$};
\node at (5.2,0.2) {$u$};
\node at (7.2,0.2) {$v$};
\node at (5.2,-1.2) {$w$};
\node at (7.2,-1.2) {$x$};
\node at (1,-2) {N-poset};
\node at (6,-2) {bowtie};
\end{tikzpicture}
\caption{N-poset and bowtie.}
\label{fig.Nbow}
\end{figure}

\begin{proof}
We first show that a $\V$-poset does not contain an N-poset or a bowtie. We reason by induction on the size $n$ of the poset. If $n=1$, the claim holds trivially. Assume that it holds whenever $n \leq k$, for some $k\geq 1$. Now, consider the case where $n=k+1$. By Definition~\ref{defnvpo}, three operations can be made to obtain such a poset:

\begin{itemize}
\item Taking the union of disjoint $\V$-posets of size less than or equal to $k$. By the induction hypothesis, those posets are N- and bowtie-free, and so is their disjoint union. 

\item Adding a greatest element to a $\V$-poset $P$ of size $k$. By the induction hypothesis again, the poset $P$ is N- and bowtie-free. Moreover, since $g$ is comparable to all other elements in the poset, there is no induced N-poset nor a bowtie that has $g$ as an element. Thus, the new poset is N- and bowtie-free.

\item The same reasoning applies when a least element is added.
\end{itemize}

Now, let us prove that the converse is true, i.e., an N- and bowtie-free poset is indeed a $\V$-poset. Once again, we use induction on the size $n$ of the poset. The claim is straightforward when $n=1$. For the induction step, let us consider all maximal and minimal elements of the poset. If there exists a component with at least two maximal and at least two minimal elements, an induced N-poset or bowtie will occur. So each component has either only one maximal element or only one minimal element, which implies that each component has a least or a greatest element. Now, let us remove these elements. The remaining poset is still N- and bowtie-free, so by the induction hypothesis, the poset can be constructed as in Definition~\ref{defnvpo}.  
\end{proof}

\begin{rem}
The family of N-free (or series-parallel) posets is well-studied in the literature, see~\cite{habib} for an early study of these posets, and~\cite{bouchemakh,felsner,zaguia} for some more recent examples. In \cite{gibson}, it is shown that N-free posets are \emph{skeletal}. One of several equivalent definitions of such a poset is that every cutset contains a maximal antichain.
\end{rem}

Let us finally show that $\V$-posets are indeed the natural class on which to define our polynomial by proving that there is no polynomial with nonnegative integer coefficients that satisfies Theorem~\ref{thmeval_poset} for an N-poset or a bowtie. 

Let us first consider a bowtie. Since it has two maximal antichains, following Theorem~\ref{thmeval_poset}, the polynomial should be of the form $x^{a_1}y^{b_1}z^{c_1}+x^{a_2}y^{b_2}z^{c_2}$ for some nonnegative integers $a_1,b_1,c_1,a_2,b_2,c_2$. Moreover, the number of antichains is $7$, and by the same proposition the polynomial should satisfy $2^{a_1}+2^{a_2}=7$. This equation has no integer solutions for $a_1$ and $a_2$, so there is no polynomial with nonnegative integer coefficients which satisfies Theorem~\ref{thmeval_poset} for a bowtie.

Next, for an N-poset, the number of maximal antichains is $3$, so the polynomial should be of the form $x^{a_1}y^{b_1}z^{c_1}+x^{a_2}y^{b_2}z^{c_2}+x^{a_3}y^{b_3}z^{c_3}$ for nonnegative integers $a_1,b_1,c_1,a_2,b_2,c_2,a_3,b_3,c_3$. Moreover, the number of antichains and the number of cutsets is $8$, and $2^4=16$. Therefore, to satisfy Theorem~\ref{thmeval_poset}, we have the following system of equations:
\[\begin{cases}
8&=2^{a_1}+2^{a_2}+2^{a_3},\\
8&=2^{c_1}+2^{c_2}+2^{c_3},\\
16&=2^{a_1+c_1}+2^{a_2+c_2}+2^{a_3+c_3}.
\end{cases}
\]

For the first equation, the values $a_1,a_2,a_3$ have to be a permutation of $(2,1,1)$, so $a_1+a_2+a_3=4$. Likewise, $c_1,c_2,c_3$ have to form a permutation of $(2,1,1)$, so the sum is $c_1+c_2+c_3=4$. These two equations give us $a_1+a_2+a_3+c_1+c_2+c_3=8$. However, the exponents in the last equation have to be a permutation of $(3,2,2)$, so the sum would be $a_1+a_2+a_3+c_1+c_2+c_3=7 \neq 8$. Therefore there is no polynomial which satisfies Theorem~\ref{thmeval_poset} for an N-poset either.

\bibliographystyle{abbrv} 
\bibliography{Polynomial}

\end{document}